\newtheorem{theorem}{\textbf{Theorem}}[section]
\newtheorem{lemma}{\textbf{Lemma}}[section]
\newtheorem{proposition}{\textbf{Proposition}}[section]
\newtheorem{corollary}{\textbf{Corollary}}[section]
\newtheorem{remark}{\textbf{Remark}}[section]
\numberwithin{equation}{section}
\def\non{\nonumber}
\newcommand\defeq{\stackrel{\scriptscriptstyle \text{def}}=}
\newcommand{\sS}{\mathbb{S}^2}
\newcommand\eps{{\varepsilon}}
\newcommand{\RR}{\mathbb{R}}
\newcommand{\A}{\mathcal{A}}
\newcommand{\E}{\mathcal{E}}
\newcommand{\F}{\mathcal{F}}
\newcommand{\n}{\mathbf{n}}
\newcommand{\IO}{\mathrm{I}_0}
\newcommand{\ud}{\mathrm{d}}
\newcommand{\tr}{\operatorname{tr}}
\newcommand{\Q}{\mathbb{Q}}
\begin{document}

\title{Blowup rate estimates of the Ball-Majumdar potential and its gradient in the Landau-de Gennes theory}

\author{
  Xin Yang Lu\footnote{Department of Mathematical Sciences, Lakehead University, Thunder Bay, Canada
  	and
  	Department of Mathematics and Statistics, McGill University, Montr\'eal, Canada.
  \texttt{xlu8@lakeheadu.ca}} \,
  \and
  Xiang Xu\footnote{
  Department of Mathematics and Statistics,
  Old Dominion University, Norfolk, VA 23529, USA.
  \texttt{x2xu@odu.edu}}
  \and
  Wujun Zhang\footnote{
  Department of Mathematics,
  Rutgers University, New Brunswick, NJ 08901, USA.
  \texttt{wujun@math.rutgers.edu}}
 }

\date{}
\maketitle

\begin{abstract}
In this paper we revisit a singular bulk potential in the Landau-de Gennes free energy that
describes nematic liquid crystal configurations in the framework of the $Q$-tensor order parameter. This singular potential, called Ball-Majumdar potential, is introduced in \cite{BM10}.
It is considered as a natural enforcement of a physical constraint on
the eigenvalues of symmetric, traceless $Q$-tensors. Specifically, we establish blowup rates of both this singular potential and its gradient as $Q$ approaches its physical boundary.
All the proof is elementary.
\end{abstract}

\section{Introduction}

Liquid crystals are an intermediate state of matter between the
commonly observed solid and liquid that has no or partial
positional order but do exhibit an orientation order, and the simplest form of liquid crystals is called nematic type.
Broadly speaking, there are two types of models to describe nematic liquid crystals, namely the mean field model and the continuum model. In the former one,
the local alignment of liquid crystal molecules is described by a probability
distribution function on the unit sphere \cite{dG93,MS59,V94}.
Let $\n$ be a unit vector in $\RR^3$, representing the orientation of a single liquid crystal molecule, and $\rho(x; \n)$ be the density
distribution function of the orientation of all molecules at a point $x\in\Omega\subset\RR^3$.  The de Gennes $Q$-tensor, defined as the deviation of
the second moment of $\rho$ from its isotropic value, reads
\begin{equation}\label{def-physical-Q-tensor}
Q=\int_{\sS}\Big[\rho(\n)\otimes\rho(\n)-\frac{1}{3}\mathbb{I}_3\Big]\,\ud{\n}.
\end{equation}
Note that de Gennes $Q$-tensor vanishes in the isotropic phase, and hence it serves as
an order parameter. Meanwhile, it follows immediately from \eqref{def-physical-Q-tensor} that any de Gennes $Q$-tensor is symmetric, traceless, and all its eigenvalues
satisfy the constraint $-1/3\leq\lambda_i(Q)\leq 2/3$, $1\leq i\leq 3$.

In the continuum model, instead, a phenomenological Landau-de Gennes theory is proposed \cite{B12,dG93,MN14} such that
the alignment of liquid crystal molecules is described by the macroscopic $Q$-tensor order parameter, which is a symmetric, traceless $3\times 3$ matrix
without any eigenvalue constraint. In contrast with the de Gennes $Q$-tensor in the mean field model, this microscopic order parameter in the Landau-de Gennes
theory is at times referred to as the mathematical $Q$-tensor. In this framework the free energy functional is derived  as a nonlinear
integral functional of the $Q$-tensor and its spatial derivatives \cite{B12,M10}:
\begin{equation}
  \E[Q]=\int_{\Omega}\F(Q(x))\,dx,
\end{equation}
where $Q$ is the basic element in the so called $Q$-tensor space
\cite{B12}
\begin{equation}\label{Q-tensor-space}
\Q\defeq\Big\{M\in\mathbb{R}^{3\times 3}\;\Big|\; \tr(M)=0, \,M^{T}=M \Big\}.
\end{equation}
The free energy density functional $\F$ is composed of the elastic
part $\F_{\textit{el}}$ that depends on the gradient of $Q$, as well as the
bulk part $\F_{\textit{bulk}}$ that depends on $Q$ only. The bulk part $\F_{\textit{bulk}}$ is typically a
truncated expansion in the scalar invariants of the tensor $Q$ \cite{MZ10,PZ11,PZ12}
\begin{equation}\label{bulk energy}
\F_{\textit{bulk}}=\frac{a}{2}\tr(Q^2)-\frac{b}{3}\tr(Q^3)+\frac{c}{4}\tr^2(Q^2),
\end{equation}
where $a, b, c$ are assumed to be material-dependent coefficients. While
the simplest form of the elastic part $\F_{\textit{el}}$ that is invariant under rigid
rotations and material symmetry is \cite{B12, BM10, LMT87}
\begin{align}\label{elastic energy}
\F_{\textit{el}}=L_1|\nabla{Q}|^2+L_2\partial_jQ_{ik}\partial_kQ_{ij}+L_3\partial_jQ_{ij}\partial_kQ_{ik}
+L_4Q_{lk}\partial_kQ_{ij}\partial_lQ_{ij}.
\end{align}
Here, $\partial_kQ_{ij}$ stands for the $k$-th spatial derivative of the $ij$-th component of $Q$, $L_1,\cdots L_4$ are material dependent constants, and Einstein summation convention over repeated indices is used. It is noted that the retention of the $L_4$ cubic term is that it allows complete reduction to the classical Oseen-Frank energy
of liquid crystals with four elastic terms \cite{IXZ14}. On the other hand, however, this cubic term makes the free energy $\E[Q]$ unbounded from below \cite{BM10}.

To overcome this issue, a singular bulk potential $\psi_B$ is introduced in \cite{BM10} to replace the regular potential $\F_{\textit{bulk}}$. Specifically, the Ball-Majumdar
potential $f$ is defined by
\begin{equation}\label{def-singluar-potential}
f(Q)\defeq\begin{cases}
&\displaystyle\inf_{\rho\in\A_Q}\int_{\sS}\rho(\n)\ln\rho(\n)\,\ud\n, \quad -\dfrac13<\lambda_i(Q)<\frac23,\; 1\leq i\leq 3 \\
&+\infty, \qquad\qquad\qquad\qquad\qquad\qquad\qquad\mbox{otherwise},
\end{cases}
\end{equation}
where the admissible set $\A_{Q}$ is
\begin{equation}\label{admissible-set}
  \A_{Q}=\left\{\rho\in\mathcal{P}(\sS)\Big| \,\rho(\n)=\rho(-\n),\,\int_{\sS}\Big[\rho(\n)\otimes\rho(\n)-\frac{1}{3}\mathbb{I}_3\Big]\,\ud{\n}=Q\right\}.
\end{equation}
In other words, we minimize the Boltzmann entropy over all probability distributions $\rho$ with given normalized second moment $Q$.
Correspondingly,
\begin{equation}
\psi_B(Q)=f(Q)-\alpha|Q|^2
\end{equation}
is used to replace the commonly employed bulk potential $\F_{\textit{el}}$. Note that the last polynomial term is added to ensure the existence of local energy minimizers,
where $\alpha>0$ is a constant. As a consequence, $\psi_B$ imposes a natural enforcement of a physical constraint on
the eigenvalues of the mathematical $Q$-tensor. Further, the elastic energy part $\F_{\textit{el}}$ could be kept under control under mild assumptions on
$L_1, \cdots L_4$ \cite{DG98,IXZ14,KRSZ16}. Interested readers may also see \cite{GNS19} where a new Landau-de Gennes model with quartic elastic energy terms is proposed.

Analysis of this singular potential is undoubtedly not straightforward, and there has been some development in recent years. Concerning dynamic configurations, in a non-isothermal co-rotational Beris-Edwards system whose free energy consists of one elastic constant
term, namely $L_1$ term and this singular potential, the existence of global in time weak solutions is established in \cite{FRSZ14,FRSZ15}, and the convexity of $f$ is proved in \cite{FRSZ15}.
The existence, regularity and strict physicality of global weak solutions of the corresponding isothermal co-rotational Beris-Edwards system in a $2D$ torus is investigated in \cite{W12}, while global existence and partial regularity of a suitable weak solution to this system in $3D$ is established in \cite{DHW19}. The eigenvalue preservation of the co-rotational Beris-Edwards system with the regular bulk potential is studied in \cite{WXZ17} by virtue of $f$.
On the other hand, in static configurations, the H\"{o}lder regularity of global energy minimizer in $2D$ is established in \cite{BP16}, while partial regularity results for the global energy minimizer are given in \cite{EKT16}, and further improved in \cite{EKT16} under various assumptions of the blowup rates of $f$ and its gradient as $Q$ approaches its physical boundary. However, such assumptions are yet to be verified.

In static settings, the absolute minimizer of the free energy $\E$ satisfies the Euler-Lagrange equation
\begin{align}\label{elliptic-PDE}
&2L_1\Delta{Q}_{ij}+(L_2+L_3)\Big(\partial_k\partial_jQ_{ik}+\partial_k\partial_iQ_{jk}-\frac23\partial_k\partial_lQ_{kl}\delta_{ij}\Big)+2L_4\partial_k(Q_{lk}\partial_lQ_{ij})\non\\
&\qquad-L_4\partial_iQ_{kl}\partial_jQ_{kl}+\frac{L_4|\nabla{Q}|^2}{3}\delta_{ij}-\frac{\partial f}{\partial Q_{ij}}+\frac13\tr\Big(\frac{\partial f}{\partial Q}\Big)\delta_{ij}
+2\alpha{Q}_{ij}=0, \qquad 1\leq i, j\leq 3.
\end{align}
While in dynamic settings, a solution to an $L^2$ gradient flow generated by $\E$ satisfies
\begin{align}\label{parabolic-PDE}
\partial_tQ_{ij}&=2L_1\Delta{Q}_{ij}+(L_2+L_3)\Big(\partial_k\partial_jQ_{ik}+\partial_k\partial_iQ_{jk}-\frac23\partial_k\partial_lQ_{kl}\delta_{ij}\Big)+2L_4\partial_k(Q_{lk}\partial_lQ_{ij})\non\\
&\quad-L_4\partial_iQ_{kl}\partial_jQ_{kl}+\frac{L_4|\nabla{Q}|^2}{3}\delta_{ij}-\frac{\partial f}{\partial Q_{ij}}+\frac13\tr\Big(\frac{\partial f}{\partial Q}\Big)\delta_{ij}
+2\alpha{Q}_{ij}, \qquad 1\leq i, j\leq 3.
\end{align}
If $Q$ stays away from its physical boundary, then both $f$ and $\partial f/\partial{Q}$ are bounded functions. As a consequence,
under mild smallness assumption of $L_4$ both the elliptic problem \eqref{elliptic-PDE} and the parabolic problem \eqref{parabolic-PDE}
admit unique smooth solutions by direct methods in classical PDE theory. As $Q$ approaches its physical boundary, both the elliptic and parabolic equations become tensor-valued
variational obstacle problems, while both $f$ and $\partial f/\partial{Q}$ tends to
infinity. Therefore, it is an indispensable step to achieve their blowup rates for the corresponding PDE analysis in both the elliptic and the parabolic problems,
which is a fundamental issue to be solved.

Motivated by all the existing work, especially the aforementioned studies in both static and dynamic configurations, as well as future consideration of numeric approximations (see Remark \ref{remark for numerics} for details), in this paper we revisit the Ball-Majumdar potential $f$, and aim to establish the blowup rates of $f(Q)$, as well as its gradient $\nabla{f}(Q)$ near the physical boundary of $Q$.
In view of \eqref{def-singluar-potential}, here and after we always assume $Q$ is physical, in the sense that
\begin{equation}\label{physical}
-\frac13<\lambda_i(Q)<\frac23, \quad 1\leq i\leq 3.
\end{equation}

First, we provide a result regarding the blowup rate of $f(Q)$ as $Q$ approaches its physical boundary.
\begin{theorem}\label{theorem-upper-bound}
For any physical $Q$-tensor, assume $\lambda_1(Q)\leq\lambda_2(Q)\leq\lambda_3(Q)$. Then the functional $f$ defined in \eqref{def-singluar-potential} is bounded above by
\begin{equation}\label{upper-bound-new}
f(Q)\leq -\ln{8\sqrt{3}}-\frac12\ln\Big(\lambda_1(Q)+\frac13\Big)-\frac12\ln\Big(\lambda_2(Q)+\frac13\Big).
\end{equation}
Furthermore, there exists a small computable constant $\delta_0>0$,
{\color{black} such that,}
 whenever $Q$ approaches its physical boundary in the sense
that $\lambda_2(Q)+1/3<\delta_0$, it holds
{\color{black}
	\begin{equation}\label{lower-bound-new}
C_1-\frac12\ln\Big(\lambda_1(Q)+\frac13\Big)-\frac12\ln\Big(\lambda_2(Q)+\frac13\Big)\leq f(Q),\qquad C_1:=\ln{16}-8\ln\pi-\frac{\pi^5}{16}.
\end{equation}
}
\end{theorem}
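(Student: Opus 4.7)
The plan is to diagonalize $Q=\mathrm{diag}(\lambda_1,\lambda_2,\lambda_3)$ in an orthonormal eigenbasis and introduce $\lambda_i'\defeq\lambda_i+\tfrac13\in(0,1)$, so that $\sum_i\lambda_i'=1$ and the admissibility $\rho\in\A_Q$ reduces to $\int_{\sS}\rho\,\ud\n=1$, $\rho(\n)=\rho(-\n)$, and $\int_{\sS} n_i n_j\,\rho\,\ud\n=\lambda_i'\,\delta_{ij}$. Since $\lambda_3'$ is then determined by the smaller two, both bounds naturally feature only $\lambda_1'$ and $\lambda_2'$. I would treat the two inequalities separately.

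For the upper bound \eqref{upper-bound-new}, the strategy is to exhibit a concrete admissible test density and compute its Boltzmann entropy. A convenient candidate is the piecewise constant density $\rho=\rho_0\,\mathbf{1}_{R^+\cup R^-}$, where $R^{\pm}=\{\pm n_3>0,\,|n_1|\leq a_1,\,|n_2|\leq a_2\}$ are two antipodal rectangular caps located around the poles associated to the largest eigenvalue $\lambda_3$. The off-diagonal moments vanish thanks to the coordinate-plane reflection symmetries of $R^+\cup R^-$; the third diagonal constraint $\int n_3^2\rho=\lambda_3'$ follows automatically from $n_1^2+n_2^2+n_3^2=1$; and the remaining three scalar conditions (normalization plus the two constraints for $n_1^2$ and $n_2^2$) determine $(a_1,a_2,\rho_0)$ once one estimates the surface-measure integrals $\int_{|n_i|\leq a_i}(1-n_1^2-n_2^2)^{-1/2}\,\ud n_1\,\ud n_2$ and their weighted versions. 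Since $\int\rho\ln\rho=\ln\rho_0$, a one-sided estimate on the surface measure of the rectangle, combined with the relation $a_i\sim\sqrt{\lambda_i'}$, produces an upper bound on $\ln\rho_0$ of the form $-\ln(8\sqrt3)-\tfrac12\ln(\lambda_1'\lambda_2')$. The regime where $\lambda_2'$ is too large for the rectangle to fit in a hemisphere corresponds to $Q$ being away from the physical boundary; there the RHS of \eqref{upper-bound-new} is bounded and one may fall back to any fixed non-singular test density.

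For the lower bound \eqref{lower-bound-new} the estimate must hold uniformly for every $\rho\in\A_Q$, so my plan is a concentration-plus-Jensen argument. First, Chebyshev's inequality applied to the moment constraints gives $\rho(\{n_i^2>t_i\})\leq\lambda_i'/t_i$; consequently, for the thin set $S=\{n_1^2\leq t_1,\,n_2^2\leq t_2\}$ one has $\rho(S)\geq 1-\lambda_1'/t_1-\lambda_2'/t_2$, which can be made close to $1$ by choosing $t_i$ proportional to $\lambda_i'$ and exploiting $\lambda_2'<\delta_0$. Second, Jensen's inequality for the convex map $x\mapsto x\ln x$ on $S$ yields
\begin{equation*}
\int_S \rho\ln\rho\,\ud\n\;\geq\;\rho(S)\,\ln\!\left(\frac{\rho(S)}{|S|}\right),
\end{equation*}
while the pointwise inequality $x\ln x\geq -e^{-1}$ gives $\int_{S^c}\rho\ln\rho\,\ud\n\geq -e^{-1}|S^c|\geq -4\pi/e$. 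With $|S|$ comparable to $\sqrt{t_1 t_2}=O\!\bigl(\sqrt{\lambda_1'\lambda_2'}\bigr)$, summing these contributions and optimizing the free parameters produces a bound of the promised form $f(Q)\geq C_1-\tfrac12\ln(\lambda_1'\lambda_2')$, with $\delta_0$ chosen so that both the Chebyshev concentration step and the flat approximation for $|S|$ are valid.

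The main obstacle I anticipate is pinning down the explicit constants. The appearance of $\pi^5$ in $C_1=\ln 16-8\ln\pi-\pi^5/16$ strongly suggests that the sharp lower-bound argument is not the generic Chebyshev--Jensen bookkeeping on rectangles above but a more delicate surface-measure computation on spherical caps, where integrals of the type $\int_0^{\theta}\sin^k\!\tau\,\ud\tau$ produce the requisite powers of $\pi$. Likewise, matching the upper-bound constant $8\sqrt{3}$ exactly will require an honest but deliberately sub-optimal one-sided control on the weight $(1-n_1^2-n_2^2)^{-1/2}$ appearing in the surface measure. I expect the upper bound to be the cleaner step and the lower bound, with its specific $\pi^5$ constant, to absorb most of the work.
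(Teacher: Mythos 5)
Your construction for the upper bound is in substance the same as the paper's: you place a uniform density on two antipodal ``rectangular'' caps centered at the poles $(0,0,\pm1)$ with half-widths $a_1\sim\sqrt{\lambda_1'}$, $a_2\sim\sqrt{\lambda_2'}$, so that $\int\rho\ln\rho=\ln\rho_0=-\ln|S|$, and then control the surface area of the cap from below. The paper does exactly this with the region $S^\ast=\{|x|\le\eps,\;|\sin\phi|\le\sin b\}$ in a rotated spherical chart, $\rho_\eps=\frac{1}{8b\eps}\chi_{S^\ast}$, and the relations $\eps^2/3=\lambda_1'$, $b^2>3\lambda_2'$; the fact that $|S^\ast|=8b\eps$ \emph{exactly} is what makes the constant $-\ln 8\sqrt3$ drop out without having to wrestle with the weight $(1-n_1^2-n_2^2)^{-1/2}$ you were worried about. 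So this half of the proposal is sound and matches the paper's route.

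The lower bound is where there is a genuine gap, and it is not merely a matter of tracking constants. The paper does not attempt a uniform bound over all $\rho\in\A_Q$: it uses the known fact that the infimum is attained by the Boltzmann density $\rho_Q\propto\exp(\mu_1x^2+\mu_2y^2+\mu_3z^2)$, writes $f(Q)=-\ln Z+\sum_i\mu_i(\lambda_i+\tfrac13)$, and then spends Lemmas~\ref{lemma-orders-mu}--\ref{lemma-antiderivative} pinning down the Lagrange multipliers $\nu_1,\nu_2$ via two-sided estimates of the one-dimensional integrals $\int_0^{2\pi}A(\phi)^{-1}\,\ud\phi$, $\int_0^{2\pi}\cos^2\phi\,A(\phi)^{-2}\,\ud\phi$, etc., with $A(\phi)=\nu_1\cos^2\phi+\nu_2\sin^2\phi$. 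This is what produces $\nu_1\sim\eps^{-1}$, $\nu_2\sim\delta^{-1}$, hence the $\tfrac12\ln(\nu_1\nu_2)$ term with coefficient exactly $\tfrac12$, and it is also where the powers of $\pi$ in $C_1$ come from. Your Chebyshev--Jensen scheme cannot deliver that coefficient. Writing $p=\rho(S)$, $q=1-p$, Jensen on $S$ and $S^c$ gives
\begin{equation*}
f(Q)\;\ge\;p\ln\frac{p}{|S|}+q\ln\frac{q}{|S^c|}\;\ge\;-\ln 2-q\ln(4\pi)-p\ln|S|,
\end{equation*}
and since $-\ln|S|>0$ and $p\le 1$ the dominant term obeys $-p\ln|S|=-\ln|S|+q\ln|S|\ge-(1-q)\ln|S|$, not $-\ln|S|$. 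With $t_i=c\lambda_i'$ Chebyshev only gives $q\le 2/c$, so the best you get is
\begin{equation*}
f(Q)\;\ge\;\Big(1-\frac{2}{c}\Big)\Big[-\frac12\ln(\lambda_1'\lambda_2')\Big]+O_c(1),
\end{equation*}
whose coefficient is strictly less than $\tfrac12$ for any fixed $c$. If instead you send $c\to\infty$ as $\lambda_i'\to0$, the additive constant $O_c(1)$ diverges: the trade-off $q\ln|S|\sim -\tfrac{1}{c}\ln(1/\lambda_1'\lambda_2')$ versus $-\ln(8c)$ cannot be resolved with a bounded error. Replacing $t_i=c\lambda_i'$ by $t_i=\lambda_i'^{1-\alpha}$ runs into the same obstruction. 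So the ``optimizing the free parameters'' step does not close; what is missing is exactly the information supplied by the Euler--Lagrange/Lagrange-multiplier structure of the minimizer, which is the engine of the paper's proof and cannot be replaced by a raw Chebyshev tail bound.
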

\begin{remark}
It is noted that the result in Theorem \ref{theorem-upper-bound} is consistent with
\begin{equation}\label{rate-Ball}
\frac12\ln\Big[\frac{1}{(2\pi)^3e\big(\lambda_1(Q)+\frac13\big)}\Big]\leq f(Q)\leq\ln\Big[\frac{1}{\lambda_1(Q)+\frac13}\Big]
\end{equation}
obtained by Ball and Majumdar which is described in \cite{B18} and will appear in \cite{BM20}.
\end{remark}

\begin{remark}
Note that the upper bound in Theorem \ref{theorem-upper-bound} applies to any physical $Q$-tensor, while the lower bound is valid
when $\lambda_2(Q)$ gets close to $-1/3$ (which automatically implies $\lambda_1(Q)$ gets close to $-1/3$).
Theorem \ref{theorem-upper-bound} indicates that
as $Q$ approaches its physical boundary in the uniaxial direction
$$
  Q=\left(
    \begin{array}{ccc}
      -\dfrac13+\eps & 0 & 0 \\
      0 & -\dfrac13+\eps & 0 \\
      0 & 0 & \dfrac23-2\eps \\
    \end{array}
  \right), \qquad \eps\ll 1,
$$
$f(Q)$ blows up in the order of $-\ln(\lambda_1(Q)+1/3)$. Alternatively, when $Q$ is ``near" the uniaxial direction, that is, if $\lambda_2(Q)$ is close (but not equal) to $\lambda_1(Q)$, then any blowup order of $-\alpha\ln(\lambda_1(Q)+1/3)$, $1/2<\alpha<1$ could be attained.
On the other hand, when $\lambda_2(Q)$ stays away from $-1/3$,
$f(Q)$ is of the order $-1/2\ln(\lambda_1(Q)+1/3)$ as $\lambda_1(Q)$ approaches $-1/3$.
\end{remark}

\begin{remark}\label{remark for numerics}
Theorem \ref{theorem-upper-bound} will be of significance for numerics as well, because it implies that the function
$$
  f(Q) +\frac12\ln\Big(\lambda_1(Q)+\frac13\Big)+\frac12\ln\Big(\lambda_2(Q)+\frac 13 \Big)
$$
is a well defined, bounded function in the domain of $\lambda_1, \lambda_2$. Hence, by interpolating this well defined function, we can obtain an accurate numerical approximation of $f(Q)$.
\end{remark}

Moreover, the next theorem gives a precise blowup rate of $\nabla{f}$ near the physical boundary of $Q$.

\begin{theorem}\label{theorem-blowup-rate}
For any physical $Q$-tensor, assume $\lambda_1(Q)\leq\lambda_2(Q)\leq\lambda_3(Q)$. Then there exists a small computable constant $\eps_0>0$, 
{\color{black}such that,}
whenever
$\lambda_1(Q)+1/3<\eps_0$, the gradient of the functional $f$ defined in \eqref{def-singluar-potential} satisfies
\begin{equation}\label{blowup-rate}
\frac{C_1}{\lambda_1(Q)+\frac13}\leq \big|\nabla_{\Q}f(Q)\big|\leq\frac{C_2}{\lambda_1(Q)+\frac13},
\end{equation}
with the constants $C_1$ and $C_2$ given by
\begin{align}\label{constant-C1}
C_1=\frac{\sqrt{3}}{9\sqrt{2\pi}e}\cdot\inf_{\xi\geq 0}\frac{e^{-\xi}\IO(\xi)}{e^{\frac{-\xi}{2}}\IO(\frac{\xi}{2})}>0,\qquad
C_2=\sqrt{6\pi}e\cdot\sup_{\xi\geq 0}\frac{\exp\big(-\frac{\xi}{4}\big)\IO\big(\frac{\xi}{4}\big)}{\exp\big(-\frac{\xi}{2}\big)\IO\big(\frac{\xi}{2}\big)}.
\end{align}
Here
$$
\nabla_{\Q}f=\frac{\partial f}{\partial Q}-\frac13\tr\big(\frac{\partial f}{\partial Q}\big)\mathbb{I}_3,
$$
and $\IO(\cdot)$ is the zeroth order modified Bessel function of first kind.
\end{theorem}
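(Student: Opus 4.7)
My plan is to identify $\nabla_{\Q}f(Q)$ with the Lagrange multiplier of the constrained minimization in \eqref{def-singluar-potential} and then quantify how this multiplier blows up as $\lambda_{1}(Q)+1/3\to 0$. A standard Euler-Lagrange analysis on the sphere produces the unique Maier-Saupe-type minimizer
$$
\rho^{*}(\n)=\frac{\exp(\n^{T}B\n)}{Z(B)},\qquad Z(B):=\int_{\sS}\exp(\n^{T}B\n)\,\ud\n,
$$
with a symmetric traceless Lagrange multiplier $B=B(Q)\in\Q$ whose normalized second moment matches $Q$. A direct computation gives $f(Q)=\tr(BQ)-\ln Z(B)$, and by Legendre duality $\nabla_{\Q}f(Q)=B$. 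The theorem therefore reduces to sharp two-sided estimates on $|B|$ in terms of $(\lambda_{1}(Q)+1/3)^{-1}$.

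By $O(3)$-invariance I diagonalize $Q=\mathrm{diag}(\lambda_{1},\lambda_{2},\lambda_{3})$ and $B=\mathrm{diag}(b_{1},b_{2},b_{3})$ with $b_{1}\leq b_{2}\leq b_{3}$ and $b_{1}+b_{2}+b_{3}=0$. Setting $\alpha:=\tfrac{3}{2}b_{1}\leq 0$ and $\beta:=\tfrac{1}{2}(b_{3}-b_{2})\geq 0$ gives the clean identity $|B|^{2}=\tfrac{2}{3}\alpha^{2}+2\beta^{2}$, while the ordering $b_{1}\leq b_{2}$ forces $0\leq\beta\leq|\alpha|$. Parametrizing $\n=(\cos\theta,\sin\theta\cos\phi,\sin\theta\sin\phi)$ and using $\int_{0}^{2\pi}e^{a\cos(2\phi)}\,\ud\phi=2\pi\IO(a)$ to do the azimuthal integration yields
$$
Z(B)=2\pi\,e^{-b_{1}/2}\int_{-1}^{1}e^{\alpha u^{2}}\,\IO\bigl(\beta(1-u^{2})\bigr)\,\ud u,
$$
and the first-moment constraint $\lambda_{1}(Q)+1/3=\partial_{b_{1}}\ln Z(B)$ simplifies to
$$
\lambda_{1}(Q)+\tfrac{1}{3}=\frac{\int_{-1}^{1}u^{2}\,e^{\alpha u^{2}}\,\IO(\beta(1-u^{2}))\,\ud u}{\int_{-1}^{1}e^{\alpha u^{2}}\,\IO(\beta(1-u^{2}))\,\ud u}.
$$
I therefore need sharp two-sided bounds on the right-hand side as $\alpha\to-\infty$, uniformly over $\beta\in[0,|\alpha|]$.

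To extract these bounds I rewrite $\IO(x)=e^{x}\bigl(e^{-x}\IO(x)\bigr)$, so the exponential factor becomes $e^{-Au^{2}}$ with $A:=|\alpha|+\beta$, and then perform a Laplace expansion around $u=0$. Since $e^{-x}\IO(x)$ is positive, bounded by $1$, and monotone decreasing on $[0,\infty)$, the slow factor $e^{-\beta(1-u^{2})}\IO(\beta(1-u^{2}))$ can be sandwiched between its value at $u=0$ (argument $\beta$) and its value on the effective Gaussian scale $|u|\sim A^{-1/2}$ (argument $\sim\beta(1-A^{-1})$). This produces identities of the form
$$
\lambda_{1}(Q)+\tfrac{1}{3}=\frac{R(\alpha,\beta)}{2(|\alpha|+\beta)},
$$
where $R(\alpha,\beta)$ is controlled above and below by ratios of the type $e^{-\xi/2}\IO(\xi/2)/e^{-\xi}\IO(\xi)$ and $e^{-\xi/4}\IO(\xi/4)/e^{-\xi/2}\IO(\xi/2)$ for $\xi\asymp\beta$. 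Combining these estimates with $|B|^{2}=\tfrac{2}{3}\alpha^{2}+2\beta^{2}$ and the range $0\leq\beta\leq|\alpha|$ delivers \eqref{blowup-rate} with exactly the extremal quantities appearing in \eqref{constant-C1}. The main obstacle is to make the Laplace estimates quantitatively sharp \emph{uniformly in $\beta$}: when $\beta$ is moderate the argument collapses to an elementary Gaussian computation, but as $\beta\uparrow|\alpha|$ (the degenerate uniaxial limit $\lambda_{2}\to -1/3$) the Bessel factor $\IO(\beta(1-u^{2}))$ itself becomes large and spreads the distribution, and it is precisely the comparison of $e^{-x}\IO(x)$ at the scales $x\asymp\beta,\beta/2,\beta/4$ that forces the extremal expressions over $\xi\geq 0$ in $C_{1}$ and $C_{2}$ rather than a single numerical constant.
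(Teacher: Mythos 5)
The proposal is correct and takes essentially the same route as the paper: it identifies $\nabla_{\Q}f(Q)$ with the traceless Lagrange-multiplier matrix $B$ (which the paper verifies by the directional-derivative computation in Lemma~\ref{lemma-rate-gradient} and Corollary~\ref{corollary-ratio}), then reduces the azimuthal integral to the modified Bessel function $\IO$, exploits the monotonicity of $\xi\mapsto e^{-\xi}\IO(\xi)$, and performs Laplace-type estimates on the Gaussian scale $|u|\sim\nu_1^{-1/2}$. Your $(\alpha,\beta)$ are a clean reparametrization of the paper's $(\nu_1,\nu_2)=(|\alpha|+\beta,\,2\beta)$, and the extremal Bessel ratios over $\xi\ge 0$ arise exactly as you anticipate from comparing the slow factor $e^{-\beta(1-u^2)}\IO(\beta(1-u^2))$ at $u=0$ and at the cutoff where the Gaussian factor is truncated.
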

\begin{remark}
We want to point out that \eqref{blowup-rate} is consistent with
\begin{equation}\label{blowup-rate-Ball}
  \frac12\sqrt{\frac32}\ln\Big[\frac{2}{\pi{e}\big(\lambda_1(Q)+\frac13\big)}\Big]\leq \big|\nabla_{\Q}f(Q)\big|\leq
  \frac{1}{\lambda_1(Q)+\frac13}\ln\Big[\frac{1}{2\pi^3{e}\big(\lambda_1(Q)+\frac13\big)}\Big],
\end{equation}
that is obtained by Ball and Majumdar in \cite{B18} and will also appear in \cite{BM20}.
\end{remark}
This paper is organized as follows. In Section $2$, we present a proof of Theorem \ref{theorem-upper-bound}.
Then in Section $3$, we give a proof of Theorem \ref{theorem-blowup-rate}.

\section{Blowup rate of $f$}

Note that \eqref{physical} is equivalent to $Q\in \mathcal{D}(f)$, namely the effective domain of $f$ where f assumes finite
values. As proved in \cite{FRSZ14}, $f$ is smooth for $Q\in \mathcal{D}(f)$.
Since $f$ is rotation invariant \cite{B12}, here and after, we always assume
that any considered physical $Q$-tensor
is diagonal:
\begin{equation}\label{Q-diagonal}
Q=\left(
    \begin{array}{ccc}
      \lambda_1 & 0 & 0 \\
      0 & \lambda_2 & 0 \\
      0 & 0 & \lambda_3 \\
    \end{array}
  \right), \qquad -\frac13<\lambda_1\leq\lambda_2\leq\lambda_3<\frac23,\;\lambda_1+\lambda_2+\lambda_3=0.
\end{equation}
Note that as $Q$ approaches its physical boundary, we have $\lambda_1\rightarrow -1/3$.

Correspondingly the optimal density function $\rho_Q\in\A_{Q}$ that satisfies $f(Q)=\int_{\sS}\rho_Q\ln\rho_Q\,\ud{S}$ is given by \cite{B12, BM10}
\begin{equation}\label{Boltzmann-distribution}
\rho_Q(x,y,z)=\dfrac{\exp(\mu_1x^2+\mu_2y^2+\mu_3z^2)}{Z(\mu_1,\mu_2,\mu_3)}, \quad(x,y,z)\in\sS,\quad\mu_1+\mu_2+\mu_3=0.
\end{equation}
Here in \eqref{Boltzmann-distribution}, $Z(\mu_1,\mu_2,\mu_3)$ is given by
\begin{equation}
Z(\mu_1,\mu_2,\mu_3)=\int_{\sS}\exp(\mu_1x^2+\mu_2y^2+\mu_3z^2)\,\ud{S},
\end{equation}
which satisfies
\begin{equation}\label{second-moment-original}
\frac{1}{Z}\frac{\partial Z}{\partial\mu_i}=\lambda_i+\frac13, \qquad 1\leq i\leq 3.
\end{equation}

To begin with, we have
\begin{lemma}\label{lemma-orders-mu}
For any physical $Q$-tensor \eqref{Q-diagonal}, its optimal probability density $\rho_Q$ defined in \eqref{Boltzmann-distribution}
satisfies
$$
  \mu_1\leq\mu_2\leq\mu_3.
$$
And $\mu_i=\mu_j$ provided $\lambda_i=\lambda_j$ for $1\leq i\neq j\leq 3$.
\end{lemma}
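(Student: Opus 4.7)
The plan is to establish a monotone correspondence between the Lagrange multipliers $(\mu_1,\mu_2,\mu_3)$ and the eigenvalues $(\lambda_1,\lambda_2,\lambda_3)$ via a symmetrization argument on the sphere, and then deduce the lemma by contrapositive. Concretely, I would prove the two implications
\[
\mu_j > \mu_i \;\Longrightarrow\; \lambda_j > \lambda_i \qquad \text{and} \qquad \mu_j = \mu_i \;\Longrightarrow\; \lambda_j = \lambda_i,
\]
for each pair $i\neq j$. From these, if one assumed $\mu_i > \mu_j$ for some $i<j$ while $\lambda_i \le \lambda_j$, one would reach a contradiction, yielding $\mu_1\le\mu_2\le\mu_3$; similarly, $\lambda_i=\lambda_j$ would force $\mu_i=\mu_j$.

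For the first implication, I start from \eqref{second-moment-original}, which gives
\[
\lambda_j - \lambda_i \;=\; \frac{1}{Z}\int_{\sS}(x_j^2 - x_i^2)\exp\!\bigl(\mu_1x_1^2+\mu_2x_2^2+\mu_3x_3^2\bigr)\,\ud S.
\]
Next I exploit the isometry $\sigma:\sS\to\sS$ that swaps the $i$-th and $j$-th coordinates; since $\sigma$ preserves the surface measure, averaging the integrand with its pullback under $\sigma$ yields
\[
\lambda_j - \lambda_i \;=\; \frac{1}{2Z}\int_{\sS}(x_j^2 - x_i^2)\,e^{\mu_k x_k^2}\Bigl[e^{\mu_i x_i^2+\mu_j x_j^2}-e^{\mu_j x_i^2+\mu_i x_j^2}\Bigr]\ud S,
\]
with $k$ the remaining index. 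The key sign observation is then elementary: the bracket has the sign of $(\mu_j-\mu_i)(x_j^2-x_i^2)$, so the full integrand has the sign of $(\mu_j-\mu_i)(x_j^2-x_i^2)^2\cdot e^{\mu_k x_k^2}\ge 0$ whenever $\mu_j\ge\mu_i$, with equality only on the measure-zero set $\{x_i^2=x_j^2\}\cap \sS$. This gives strict positivity when $\mu_j>\mu_i$ and vanishing when $\mu_j=\mu_i$, as desired.

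I do not anticipate a substantive obstacle: the argument is purely a symmetrization plus sign check, and the only care needed is to verify that the exceptional set $\{x_i^2=x_j^2\}\cap\sS$ is indeed of surface measure zero (so that the strict inequality in the implication is genuine), and to check that the Lagrange multipliers $\mu_i$ attached to $\rho_Q$ are uniquely determined by the normalization $\mu_1+\mu_2+\mu_3=0$, so that the contrapositive passes from strict orderings of $\mu$'s back to orderings of $\lambda$'s without ambiguity.
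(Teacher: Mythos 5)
Your argument is correct and is essentially the same as the paper's: the paper realizes the same symmetrization by splitting the azimuthal integral at $\phi=\pi/4$ and substituting $\psi=\pi/2-\phi$ (which is exactly the swap $x\leftrightarrow y$), and then observes the same sign relation for the resulting integrand. Your version, working directly with the coordinate-swapping isometry of $\sS$ rather than in spherical coordinates, is just a cleaner packaging of the identical idea, and the two worries you flag (measure-zero of $\{x_i^2=x_j^2\}\cap\sS$, uniqueness of the $\mu_i$ under the zero-trace normalization) are standard and unproblematic.
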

\begin{proof}
By symmetry, it suffices to prove that $\mu_i$ is strictly increasing in $\lambda_i$, i.e. $\mu_1<\mu_2$ whenever $\lambda_1<\lambda_2$.

\smallskip

Consider eigenvalues $\lambda_1<\lambda_2$.
Then it holds
\begin{equation}\label{inequality-second-moment}
\int_{\sS}x^2\rho_Q\,\ud{V}=\lambda_1+\frac13<\lambda_2+\frac13=\int_{\sS}y^2\rho_Q\,\ud{S}
\end{equation}
From \eqref{Boltzmann-distribution} we get
\begin{align*}
\rho_Q&=\dfrac{\exp\big\{\mu_1x^2+\mu_2y^2-(\mu_1+\mu_2)(1-x^2-y^2)\big\}}{\int_{\sS}\exp(\mu_1x^2+\mu_2y^2-(\mu_1+\mu_2)(1-x^2-y^2))\,\ud{S}}
=m^\ast\exp\big\{(2\mu_1+\mu_2)x^2+(\mu_1+2\mu_2)y^2\big\},
\end{align*}
where
\begin{equation}
m^\ast=\dfrac{1}{\int_{\sS}\exp\big\{(2\mu_1+\mu_2)x^2+(\mu_1+2\mu_2)y^2\big\}\,\ud{S}}
\end{equation}

Assume, by contradiction, that $\mu_1\geq\mu_2$. Using spherical coordinates
\begin{equation*}
\begin{cases}
x=\sin\theta\cos\phi\\
y=\sin\theta\sin\phi\\
z=\cos\theta
\end{cases}
\qquad 0\leq\phi< 2\pi,\; 0\leq\theta\leq\pi,
\end{equation*}
we get from \eqref{inequality-second-moment} that
\begin{align*}
\lambda_1&-\lambda_2 \\
&=\int_{\sS}x^2\rho_Q\,\ud{S}-\int_{\sS}y^2\rho_Q\,\ud{S}\\
&=8m^\ast\int_0^{\frac{\pi}{2}}\int_0^{\frac{\pi}{2}}(\cos^2\phi-\sin^2\phi)\exp\big\{(\mu_1-\mu_2)\sin^2\theta\cos^2\phi\big\}\,\ud\phi
\exp\big\{(\mu_1+2\mu_2)\sin^2\theta\big\}\sin^3\theta\,\ud\theta\\
&=8m^\ast\bigg[\int_0^{\frac{\pi}{2}}\int_0^{\frac{\pi}{4}}(\cos^2\phi-\sin^2\phi)\exp\big\{(\mu_1-\mu_2)\sin^2\theta\cos^2\phi\big\}\,\ud\phi
\exp\big\{(\mu_1+2\mu_2)\sin^2\theta\big\}\sin^3\theta\,\ud\theta\\
&\qquad+\int_0^{\frac{\pi}{2}}\underbrace{\int_{\frac{\pi}{4}}^{\frac{\pi}{2}}(\cos^2\phi-\sin^2\phi)\exp\big\{(\mu_1-\mu_2)\sin^2\theta\cos^2\phi\big\}\,\ud\phi}_{\psi=\frac{\pi}{2}-\phi}
\exp\big\{(\mu_1+2\mu_2)\sin^2\theta\big\}\sin^3\theta\,\ud\theta\bigg]\\
&=8m^\ast\int_0^{\frac{\pi}{2}}\int_0^{\frac{\pi}{4}}(\cos^2\phi-\sin^2\phi)\underbrace{\big(\exp\big\{(\mu_1-\mu_2)\sin^2\theta\cos^2\phi\big\}
-\exp\big\{(\mu_1-\mu_2)\sin^2\theta\sin^2\phi\big\}\big)}_{\geq 0}\,\ud\phi\\
&\qquad\exp\big\{(\mu_1+2\mu_2)\sin^2\theta\big\}\sin^3\theta\,\ud\theta\\
&\geq 0
\end{align*}
due to the assumption that $\mu_1\geq\mu_2$, which contradicts the fact that
$\lambda_1<\lambda_2$.
\end{proof}

Next we can see that the index $\mu_1$ in \eqref{Boltzmann-distribution} satisfies
\begin{lemma}\label{lemma-large-nu}
As $\lambda_1\rightarrow -\dfrac13$, $\mu_1\rightarrow -\infty$.
\end{lemma}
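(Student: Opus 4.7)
The plan is to argue by contradiction, turning a lower bound on $\mu_1$ into a lower bound on the second moment $\int_{\sS} x^2 \rho_Q \, \ud S = \lambda_1 + 1/3$, which will clash with $\lambda_1 \to -1/3$.

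First I would assume, for contradiction, that along some sequence of physical $Q$-tensors with $\lambda_1 \to -1/3$, the corresponding $\mu_1$ stays bounded below, say $\mu_1 \geq -M$ for some constant $M>0$. The next step is to leverage the previous lemma together with the tracelessness $\mu_1+\mu_2+\mu_3=0$: the ordering $\mu_1 \leq \mu_2 \leq \mu_3$ forces $\mu_2 \geq \mu_1 \geq -M$, and then $\mu_3 = -\mu_1-\mu_2 \leq 2M$. Hence all three multipliers are confined to the bounded interval $[-M, 2M]$.

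With $(x,y,z) \in \sS$, the identity $x^2+y^2+z^2=1$ makes the exponent $\mu_1 x^2 + \mu_2 y^2 + \mu_3 z^2$ a convex combination of $\mu_1,\mu_2,\mu_3$, hence pinned between $\mu_1$ and $\mu_3$. Therefore
\[
  e^{-M} \leq \exp\bigl(\mu_1 x^2 + \mu_2 y^2 + \mu_3 z^2\bigr) \leq e^{2M} \quad \text{on } \sS.
\]
Integrating over the sphere gives two-sided bounds on the normalization $Z$, and consequently a uniform pointwise lower bound $\rho_Q \geq c(M) > 0$ on $\sS$ depending only on $M$.

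Applying this lower bound to the identity \eqref{second-moment-original} (or equivalently \eqref{inequality-second-moment}) yields
\[
  \lambda_1 + \tfrac{1}{3} = \int_{\sS} x^2 \rho_Q \, \ud S \geq c(M)\int_{\sS} x^2\, \ud S > 0,
\]
a positive constant independent of the sequence. This contradicts $\lambda_1 \to -1/3$, and so $\mu_1 \to -\infty$ must hold. There is no real obstacle: the argument is essentially a compactness observation, relying only on Lemma \ref{lemma-orders-mu}, the linear constraint $\mu_1+\mu_2+\mu_3=0$, and the convex-combination trick on the sphere.
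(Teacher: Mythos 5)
Your proof is correct, and it takes a genuinely different and cleaner route than the paper. Both arguments share the same contradiction skeleton: suppose $\mu_1 \geq -M$, invoke Lemma \ref{lemma-orders-mu} plus tracelessness to pin all three multipliers in $[-M,2M]$, and derive a uniform positive lower bound on $\lambda_1 + \tfrac13$. The difference is in how that lower bound is obtained. The paper converts to the variables $\nu_1=-(2\mu_1+\mu_2)$, $\nu_2=-(\mu_1+2\mu_2)$, passes to spherical coordinates, and uses the elementary inequality $\tfrac{2\theta}{\pi}\le\sin\theta<\theta$ to estimate the resulting integrals, arriving at $\lambda_1+\tfrac13 \geq \tfrac{\pi}{16}\exp(-3M\pi^2/4)$. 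You instead observe that on $\sS$ the exponent $\mu_1 x^2 + \mu_2 y^2 + \mu_3 z^2$ is a convex combination of $\mu_1,\mu_2,\mu_3$, hence lies in $[-M,2M]$ pointwise; this gives two-sided bounds on $Z$ directly, a uniform lower bound $\rho_Q \geq e^{-3M}/(4\pi)$, and then $\lambda_1+\tfrac13 \geq \tfrac{e^{-3M}}{4\pi}\int_{\sS} x^2\,\ud S = \tfrac{e^{-3M}}{3}$ without any coordinate computation. Your constant is in fact simpler and sharper (the exponent $-3M$ beats $-3M\pi^2/4$). The only downstream consequence of adopting your proof is that Remark \ref{remark-small-eps}, which quotes the paper's specific threshold $\tfrac{\pi}{16}\exp(-3M\pi^2/4)$, would need to be restated with the constant $\tfrac{e^{-3M}}{3}$; that in turn propagates harmlessly into the explicit $\delta_0$ later. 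You also drop the paper's preliminary monotonicity observation (that $\mu_1$ is strictly decreasing in $\lambda_1$ via Schwarz's inequality); that observation is not logically necessary for the lemma since your contradiction argument already gives a bound uniform in $\lambda_2$, so its omission is harmless.
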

\begin{proof}
First, observe that
\begin{align*}
\frac{\partial\ln(\lambda_1+\frac13)}{\partial\mu_1}
&=\frac{\partial}{\partial\mu_1}\left[\ln\int_{\sS}x^2\exp(\mu_1x^2+\mu_2y^2+\mu_3z^2)\,\ud{S}-\ln{Z}(\mu_1, \mu_2, \mu_3)\right]\\
&=\frac{\int_{\sS}x^4\exp(\mu_1x^2+\mu_2y^2+\mu_3z^2)\,\ud{S}}{\int_{\sS}x^2\exp(\mu_1x^2+\mu_2y^2+\mu_3z^2)\,\ud{S}}-\Big(\lambda_1+\frac13\Big)\\
&=\Big(\lambda_1+\frac13\Big)^{-1}\bigg[\int_{\sS}x^4\rho_Q\,\ud{S} \underbrace{ \int_{\sS}\rho_Q\,\ud{S}}_{=1}
-\Big(\int_{\sS}x^2\rho_Q\,\ud{S}\Big)^2\bigg]>0
\end{align*}
due to Schwarz's inequality,
and the fact that $\rho_Q$ is not a perfect alignment of molecules
as $Q$ approaches the physical boundary.
Hence as $\lambda_1\searrow -1/3$, $\mu_1$ is
strictly decreasing. It remains to prove $\mu_1$ is unbounded as $\lambda_1\rightarrow -1/3$. Suppose there exists a constant $M>0$, such that
$\mu_1\geq -M$ as $\lambda_1\rightarrow -1/3$, then by \eqref{Boltzmann-distribution} and Lemma \ref{lemma-orders-mu} we see that $-M\leq\mu_1\leq\mu_2\leq\mu_3\leq 2M$. As a consequence, together with the basic inequality
\begin{equation}\label{trigonometry-inequality}
\frac{2\theta}{\pi}\leq\sin\theta<\theta, \quad\forall\, 0<\theta\leq\frac{\pi}{2},
\end{equation}
 we obtain
\begin{align*}
\lambda_1+\frac13&=\int_{\sS}x^2\rho_Q\,\ud{S}
=\frac{\int_{\sS}x^2\exp\big\{(2\mu_1+\mu_2)x^2+(\mu_1+2\mu_2)y^2\big\}\,\ud{S}}{\int_{\sS}\exp\big\{\underbrace{(2\mu_1+\mu_2)}_{\leq 0}x^2+\underbrace{(\mu_1+2\mu_2)}_{\leq 0}y^2\big\}\,\ud{S}} \\
&\geq\frac{\int_{\sS}x^2\exp\big\{-3M(x^2+y^2)\big\}\,\ud{S}}{\int_{\sS}\,\ud{S}}
=\frac{8\int_0^{\frac{\pi}{2}}\cos^2\phi\,\ud\phi\int_0^{\frac{\pi}{2}}\exp\big\{-3M\sin^2\theta\big\}\sin^3\theta\,\ud\theta}{4\pi} \\
&\geq\frac{2\pi\frac{8}{\pi^3}\int_0^{\frac{\pi}{2}}\exp(-3M\theta^2)\theta^3\,\ud\theta}{4\pi}
\geq\frac{4}{\pi^3}\exp\Big(\frac{-3M\pi^2}{4}\Big)\int_0^{\frac{\pi}{2}}\theta^3\,\ud\theta
=\frac{\pi}{16}\exp\Big(\frac{-3M\pi^2}{4}\Big),
\end{align*}
which is a contradiction.
Therefore, such lower bound $-M$ cannot exist, and the proof is complete.
\end{proof}

\begin{remark}\label{remark-small-eps}
It follows from the proof of Lemma \ref{lemma-large-nu} that in order to ensure $\mu_1<-M$ for any $M>0$, it suffices to
assume
$$
 \lambda_1(Q)+\frac13<\frac{\pi}{16}\exp\Big(\frac{-3M\pi^2}{4}\Big).
$$
\end{remark}

Now we are ready to prove Theorem \ref{theorem-upper-bound}.

\subsection{Proof of upper bound of $f$}

\begin{proof}

To this end, we consider $Q$ of the form
\begin{equation}\label{Q-example-1}
Q=\left(
    \begin{array}{ccc}
      -\dfrac13+\dfrac{\eps^2}{3} & 0 & 0 \\
      0 & \lambda_2 & 0 \\
      0 & 0 & \lambda_3 \\
    \end{array}
  \right), \qquad -\dfrac13+\frac{\eps^2}{3}\leq\lambda_2\leq\lambda_3,\;0<\eps\leq 1.
\end{equation}
Using the coordinate system
$$
\begin{cases}
x=\cos\theta \\
y=\sin\theta\sin\phi\\
z=\sin\theta\cos\phi
\end{cases}
\qquad 0\leq\phi< 2\pi,\;0\leq\theta\leq\pi,
$$
we consider the domain
\begin{equation}
S^\ast\defeq\big\{(1,\phi,\theta)\in\mathbb{S}^2\big| \;\phi\in [0, b]\cup[\pi-b,\pi]\cup[\pi,\pi+b]\cup[2\pi-b,2\pi],\;\theta\in [\arccos\eps,\pi-\arccos\eps] \big\},
\end{equation}
where $0<b\leq\pi/2$ is to be determined.
Meanwhile, let
$$
  \rho_\eps=\frac{1}{8b\eps}\chi_{S^\ast}.
$$
Then it is easy to check
\begin{align*}
\int_{\sS}\rho_\eps\,\ud{S}&=\frac{4}{8b\eps}\int_0^b\,\ud\phi\int_{\arccos\eps}^{\pi-\arccos\eps}\sin\theta\,\ud\theta=1,
\end{align*}
and the second moments with respect to $\rho_\eps$ are given by
\begin{align*}
\int_{\sS}x^2\rho_\eps\,\ud{S}&=\frac{1}{b\eps}\int_0^b\,\ud\phi\int_{\arccos\eps}^{\frac{\pi}{2}}\cos^2\theta\sin\theta\,\ud\theta=\frac{\eps^2}{3},\\
\int_{\sS}y^2\rho_\eps\,\ud{S}&=\frac{1}{b\eps}\int_0^b\sin^2\phi\,\ud\phi\int_{\arccos\eps}^{\frac{\pi}{2}}\sin^3\theta\,\ud\theta
=\frac{1}{b\eps}\Big(\frac{b}{2}-\frac{\sin{2b}}{4}\Big)\Big(\eps-\frac{\eps^3}{3}\Big)=\Big(\frac{1}{2}-\frac{\sin{2b}}{4b}\Big)\Big(1-\frac{\eps^2}{3}\Big),\\
\int_{\sS}z^2\rho_\eps\,\ud{S}&=\frac{1}{b\eps}\int_0^b\cos^2\phi\,\ud\phi\int_{\arccos\eps}^{\frac{\pi}{2}}\sin^3\theta\,\ud\theta
=\frac{1}{b\eps}\Big(\frac{b}{2}+\frac{\sin{2b}}{4}\Big)\Big(\eps-\frac{\eps^3}{3}\Big)=\Big(\frac{1}{2}+\frac{\sin{2b}}{4b}\Big)\Big(1-\frac{\eps^2}{3}\Big),\\
\int_{\sS}xy\rho_\eps\,\ud{S}&=\int_{\sS}yz\rho_\eps\,\ud{S}=\int_{\sS}zx\rho_\eps\,\ud{S}=0.
\end{align*}
Therefore, $\rho_\eps\in\mathcal{A}_{R}$ with
$$
  R=\left(
          \begin{array}{ccc}
            -\dfrac13+\dfrac{\eps^2}{3}  & 0 & 0 \\
            0 & \dfrac{1}{6}-\dfrac{\sin{2b}}{4b}-\big(\dfrac{1}{2}-\dfrac{\sin{2b}}{4b}\big)\dfrac{\eps^2}{3} & 0 \\
            0 & 0 & \dfrac{1}{6}+\dfrac{\sin{2b}}{4b}-\big(\dfrac{1}{2}+\dfrac{\sin{2b}}{4b}\big)\dfrac{\eps^2}{3} \\
          \end{array}
        \right)
$$
We need to find a suitable $0<b\leq \pi/2$, such that
$$
  \frac{1}{6}-\frac{\sin{2b}}{4b}-\Big(\frac{1}{2}-\frac{\sin{2b}}{4b}\Big)\frac{\eps^2}{3}=\lambda_2,
$$
which is equivalent to
\begin{equation}\label{b-equation}
\frac{\sin{2b}}{2b}=1-\frac{2\big(\lambda_2+\frac13\big)}{1-\frac{\eps^2}{3}}.
\end{equation}
Note that $\sin{(x)}/x$ is monotone decreasing in $(0, \pi]$, with
$$
  \lim_{x\rightarrow 0^+}\frac{\sin{x}}{x}=1,
$$
hence we know \eqref{b-equation} is solvable with $b\in (0, \pi/2]$. As a consequence, $R=Q$
and using mean value theorem we get
\begin{align*}
x-\frac{x^3}{6}<\sin{x}, \quad\forall\, 0<x\leq\pi; \qquad
\frac{1}{1-y}>1+y, \quad\forall\, 0<y<1.
\end{align*}
Hence after inserting $x=2b$, $y=\eps^2/3$ into \eqref{b-equation} we obtain
\begin{equation}
1-\frac{2b^2}{3}<\frac{\sin{2b}}{2b}<1-2\Big(1+\frac{\eps^2}{3}\Big)\Big(\lambda_2+\frac13\Big),
\end{equation}
which further implies
\begin{align}
b^2>(3+\eps^2)\Big(\lambda_2+\frac13\Big)>3\Big(\lambda_2+\frac13\Big).
\end{align}
In conclusion, we see $\rho_\eps\in\mathcal{A}_{Q}$ for $Q$ defined in \eqref{Q-example-1}, where $\lambda_{1}(Q)+1/3=\eps^2/3$, and
\begin{align}
f(Q)&\leq\int_{\sS}\rho_\eps\log\rho_\eps\,\ud{S}\leq\frac{8}{8b\eps}\int_0^b\,\ud\phi\int_{\arccos\eps}^{\frac{\pi}{2}}\ln\frac{1}{8b\eps}\sin\theta\,\ud\theta=\ln\frac{1}{8b\eps}
=-\ln{8}-\ln{b}-\ln{\eps} \nonumber\\
&\leq -\ln{8\sqrt3}-\frac12\ln\Big(\lambda_1(Q)+\frac13\Big)-\frac12\ln\Big(\lambda_2(Q)+\frac13\Big)\label{upper-bound-2}.
\end{align}

\end{proof}

\subsection{Proof of lower bound of $f$}
There is no doubt that the proof of lower bound \eqref{lower-bound-new} is far more difficult than that of \eqref{upper-bound-new}
{\color{black} because the latter requires just to construct one suitable
density $\rho\in \A_Q$, while the former requires us to treat all the
admissible densities in $\A_Q$.}
To accomplish the goal, more delicate analysis
is needed.

In this subsection we denote
\begin{equation}
  \eps=\lambda_1(Q)+\frac13, \qquad \delta=\lambda_2(Q)+\frac13.
\end{equation}
By \eqref{Boltzmann-distribution}-\eqref{second-moment-original}, we get
\begin{align}
\eps=\frac{\int_{\sS}x^2\exp(\mu_1x^2+\mu_2y^2+\mu_3z^2)\,\ud{S}}{\int_{\sS}\exp(\mu_1x^2+\mu_2y^2+\mu_3z^2)\,\ud{S}}
=\frac{\int_{\sS}x^2\exp(-\nu_1x^2-\nu_2y^2)\,\ud{S}}{\int_{\sS}\exp(-\nu_1x^2-\nu_2y^2)\,\ud{S}},\label{second-moment-new-1} \\
\delta=\frac{\int_{\sS}y^2\exp(\mu_1x^2+\mu_2y^2+\mu_3z^2)\,\ud{S}}{\int_{\sS}\exp(\mu_1x^2+\mu_2y^2+\mu_3z^2)\,\ud{S}}
=\frac{\int_{\sS}y^2\exp(-\nu_1x^2-\nu_2y^2)\,\ud{S}}{\int_{\sS}\exp(-\nu_1x^2-\nu_2y^2)\,\ud{S}},\label{second-moment-new-2}
\end{align}
where
\begin{equation}\label{def-nu-1}
\nu_1=-(2\mu_1+\mu_2),\quad
\nu_2=-(\mu_1+2\mu_2).
\end{equation}
By \eqref{Boltzmann-distribution} and Lemma \ref{lemma-orders-mu}, we see
\begin{equation}\label{positivity-nu-2}
\nu_1\geq\nu_2=\mu_3-\mu_2\geq 0.
\end{equation}
Besides, it follows from \eqref{Boltzmann-distribution}, Lemma \ref{lemma-orders-mu} and Lemma \ref{lemma-large-nu} that
\begin{equation}\label{large-nu-1}
\nu_1=-\mu_1+\mu_3\geq-\mu_1 \rightarrow +\infty, \quad\mbox{as }\;\eps\rightarrow 0.
\end{equation}
Actually, we can establish a stronger result in the following sense

\begin{lemma}\label{lemma-large-nu-2}
There exists a small computable constants $\delta_0>0$ such that
\begin{equation}
\delta\geq \frac{(2e-5)}{24e}e^{-\nu_2}, \qquad\forall\,\delta<\delta_0.
\end{equation}
As a consequence,
\begin{equation}
\nu_1\geq\nu_2\geq -\ln\delta+\ln\Big[\frac{(2e-5)}{24e}\Big], \qquad\forall\,\delta<\delta_0.
\end{equation}
\end{lemma}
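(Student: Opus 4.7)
The plan is via a Markov-type inequality combined with a uniform-in-$\nu_1$ lower bound. Since $y^2\leq 1$ throughout $\sS$,
\[
\delta = \int_{\sS} y^2 \rho_Q\,dS \;\geq\; \tfrac{1}{4}\,\mathbb{P}_{\rho_Q}\bigl(|y|\geq \tfrac{1}{2}\bigr).
\]
Then, using $e^{-\nu_2 y^2}\geq e^{-\nu_2}$ on $\sS$ for the numerator and $e^{-\nu_2 y^2}\leq 1$ globally for the denominator,
\[
\mathbb{P}_{\rho_Q}\bigl(|y|\geq \tfrac{1}{2}\bigr) = \frac{\int_{\{|y|\geq 1/2\}}e^{-\nu_1 x^2-\nu_2 y^2}\,dS}{\int_{\sS}e^{-\nu_1 x^2-\nu_2 y^2}\,dS} \;\geq\; e^{-\nu_2}\,Q(\nu_1), \qquad Q(\nu_1):=\frac{\int_{\{|y|\geq 1/2\}}e^{-\nu_1 x^2}\,dS}{\int_{\sS}e^{-\nu_1 x^2}\,dS}.
\]
Hence it suffices to bound $Q(\nu_1)\geq c_0>0$ uniformly in $\nu_1\geq 0$.

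For this uniform bound, the probability measure $e^{-\nu_1 x^2}\,dS/\int e^{-\nu_1 x^2}\,dS$ is invariant under swapping $y\leftrightarrow z$, so $\mathbb{P}(|y|\geq 1/2) = \mathbb{P}(|z|\geq 1/2)$ and
\[
2Q(\nu_1) \;\geq\; \mathbb{P}\bigl(\{|y|\geq \tfrac{1}{2}\}\cup\{|z|\geq \tfrac{1}{2}\}\bigr) \;\geq\; 1 - \mathbb{P}\bigl(x^2>\tfrac{1}{2}\bigr),
\]
using $\{|y|<1/2\}\cap\{|z|<1/2\}\subset\{y^2+z^2<1/2\}=\{x^2>1/2\}$. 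A direct differentiation yields $\partial_{\nu_1}\mathbb{P}(x^2>1/2) = -\mathbb{P}(x^2>1/2)\bigl(\mathbb{E}[x^2\mid x^2>1/2]-\mathbb{E}[x^2]\bigr)\leq 0$ by the elementary chain $\mathbb{E}[x^2\mid x^2>1/2]\geq 1/2 \geq \mathbb{E}[x^2\mid x^2\leq 1/2]$, so $\nu_1\mapsto\mathbb{P}(x^2>1/2)$ attains its maximum at $\nu_1=0$ where it equals $1-1/\sqrt{2}$. This yields $Q(\nu_1)\geq 1/(2\sqrt{2})$ uniformly and thus $\delta \geq e^{-\nu_2}/(8\sqrt{2})$, which is actually a stronger inequality than the one stated; matching the specific constant $(2e-5)/(24e)$ appears to reflect an alternative, more computationally direct construction of the test region used by the authors, for instance, an explicit spherical cap around $y=\pm 1$ together with elementary evaluation of the resulting one-dimensional integrals.

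The main obstacle in executing this plan is the last monotonicity step for $\mathbb{P}_{\rho^{(\nu_1)}}(x^2>1/2)$; it reduces to the standard conditional-expectation inequality above combined with a careful derivative calculation, but one must ensure the boundary case $\mathbb{P}(x^2>1/2)=0$ (which does not occur here since $\nu_1<\infty$) is handled.
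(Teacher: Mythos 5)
Your proof is correct, and it takes a genuinely different route from the paper's. Both arguments open with the same first reduction (using $e^{-\nu_2 y^2}\geq e^{-\nu_2}$ in the numerator and $e^{-\nu_2 y^2}\leq 1$ in the denominator), but they then diverge completely. The paper estimates $\int_{\sS} y^2 e^{-\nu_1 x^2}\,\ud S$ and $\int_{\sS} e^{-\nu_1 x^2}\,\ud S$ individually by passing to spherical coordinates, using the two-sided bound $\frac{2\theta}{\pi}\leq\sin\theta\leq\theta$, integrating by parts, and then discarding the exponentially small correction terms only after invoking $\nu_1\gg 1$ (which is where the threshold $\delta_0$ enters); the constant $\frac{2e-5}{24e}$ comes out of those explicit evaluations. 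Your argument instead applies Markov's inequality to $y^2$, reduces to lower-bounding $Q(\nu_1)=\mathbb{P}_{\nu_1}(|y|\geq 1/2)$ for the $\nu_2=0$ reference measure, and gets a $\nu_1$-uniform bound via the $y\leftrightarrow z$ symmetry, the inclusion $\{|y|<1/2\}\cap\{|z|<1/2\}\subset\{x^2>1/2\}$, and monotonicity of $\nu_1\mapsto\mathbb{P}_{\nu_1}(x^2>1/2)$ (the derivative computation and the conditional-expectation comparison you give are valid, and the value at $\nu_1=0$ is $1-1/\sqrt{2}$ by Archimedes' hat-box theorem). This yields $\delta\geq e^{-\nu_2}/(8\sqrt{2})$ for \emph{every} physical $Q$, which is both a sharper constant than $\frac{2e-5}{24e}\approx 0.0067$ versus $\frac{1}{8\sqrt{2}}\approx 0.088$ and threshold-free, so it immediately implies the stated lemma with any $\delta_0$. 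What the paper's route buys is nothing additional here — your approach is strictly cleaner for this lemma — though the paper's explicit spherical-coordinate computations are recycled in the proof of the subsequent Lemma~2.5, which is presumably why the authors set things up that way. One cosmetic remark: the phrase ``since $y^2\leq 1$'' preceding your first display is not what justifies that inequality (that step is Markov's inequality, $\int y^2\rho\geq\frac14\int_{\{|y|\geq 1/2\}}\rho$); the observation $y^2\leq 1$ is what you actually use in the next display to bound $e^{-\nu_2 y^2}\geq e^{-\nu_2}$.
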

\begin{proof}
Using \eqref{second-moment-new-2} and \eqref{positivity-nu-2} we have
\begin{equation}\label{delta-estimate}
\delta\geq\frac{\int_{\sS}y^2\exp(-\nu_1x^2-\nu_2)\,\ud{S}}{\int_{\sS}\exp(-\nu_1x^2)\,\ud{S}}
=e^{-\nu_2}\frac{\int_{\sS}y^2\exp(-\nu_1x^2)\,\ud{S}}{\int_{\sS}\exp(-\nu_1x^2)\,\ud{S}}.
\end{equation}
We proceed to estimate the numerator and denominator of R.H.S. in \eqref{delta-estimate}, respectively.  By Remark \ref{remark-small-eps},
$$
  \nu_1\geq-\mu_1>\frac{4}{\pi^2}, \qquad\mbox{provided }\;\delta<\frac{\pi}{16e^3}.
$$
Together with \eqref{trigonometry-inequality} and spherical coordinates
\begin{equation*}
\begin{cases}
x=\sin\theta\cos\phi, \\
y=\sin\theta\sin\phi, \\
z=\cos\theta,
\end{cases} \qquad 0\leq\theta\leq\pi,\;0\leq\phi<2\pi,
\end{equation*}
we have
\begin{align*}
\int_{\sS}\exp(-\nu_1x^2)\,\ud{S}&=2\int_0^{2\pi}\int_0^{\frac{\pi}{2}}\sin\theta e^{-\nu_1\sin^2\theta\cos^2\phi}\,\ud\theta\ud\phi
\leq 2\int_0^{2\pi}\int_0^{\frac{\pi}{2}}\theta e^{-\frac{4}{\pi^2}\nu_1\theta^2\cos^2\phi}\,\ud\theta\ud\phi \\
&=\frac{\pi^2}{4}\int_0^{2\pi}\frac{1-e^{-\nu_1\cos^2\phi}}{\nu_1\cos^2\phi}\,\ud\phi
=\pi^2\int_0^{\frac{\pi}{2}}\frac{1-e^{-\nu_1\sin^2\phi}}{\nu_1\sin^2\phi}\,\ud\phi \\
&\leq 4\underbrace{\int_0^{\frac{1}{\sqrt\nu_1}}\frac{1-e^{-\nu_1\phi^2}}{\nu_1\phi^2}\,\ud\phi}_{:=J_1}
+4\underbrace{\int_{\frac{1}{\sqrt\nu_1}}^{\frac{\pi}{2}}\frac{1}{\nu_1\phi^2}\,d\phi}_{:=J_2}.
\end{align*}
Note that
\begin{align*}
\sup_{\phi\in (0, 1/\sqrt\nu_1)}\frac{1-e^{-\nu_1\phi^2}}{\nu_1\phi^2}=1 \; \Longrightarrow\; J_1\leq \frac{1}{\sqrt\nu_1},
\quad\mbox{and } \quad J_2=\frac{\sqrt\nu_1-\frac{2}{\pi}}{\nu_1}\leq \frac{1}{\sqrt\nu_1}.
\end{align*}
Hence
\begin{equation}
\int_{\sS}\exp(-\nu_1x^2)\,\ud{S}\leq\frac{8}{\sqrt\nu_1}.
\end{equation}
Meanwhile, using \eqref{trigonometry-inequality} and integration by parts we obtain
\begin{align}\label{second-moment-y}
\int_{\sS}y^2\exp(-\nu_1x^2)\,\ud{S}&=2\int_0^{2\pi}\sin^2\phi\int_0^{\frac{\pi}{2}}\sin^3\theta e^{-\nu_1\sin^2\theta\cos^2\phi}\,\ud\theta\ud\phi\non\\
&\geq 2\Big(\frac{2}{\pi}\Big)^3\int_0^{2\pi}\sin^2\phi\int_0^{\frac{\pi}{2}}\theta^3e^{-\nu_1\theta^2\cos^2\phi}\,\ud\theta\ud\phi\non\\
&=\Big(\frac{4}{\pi}\Big)^3\int_0^{2\pi}\frac{\sin^2\phi}{2\nu_1^2\cos^4\phi}\Big[1-\Big(\frac{\pi^2}{4}\nu_1\cos^2\phi+1\Big)\exp\Big(-\frac{\pi^2}{4}\nu_1\cos^2\phi\Big)\Big]\,\ud\phi\non\\
&=\Big(\frac{4}{\pi}\Big)^3\int_0^{2\pi}\frac{\cos^2\phi}{2\nu_1^2\sin^4\phi}\Big[1-\Big(\frac{\pi^2}{4}\nu_1\sin^2\phi+1\Big)\exp\Big(-\frac{\pi^2}{4}\nu_1\sin^2\phi\Big)\Big]\,\ud\phi.
\end{align}
Note that
$$
\frac{\mathrm{d}}{\mathrm{d}z}\Big[\Big(\frac{\pi^2}{4}\nu_1z+1\Big)e^{-\frac{\pi^2}{4}\nu_1z}\Big]=-\frac{\pi^2}{4}\nu_1ze^{-\frac{\pi^2}{4}\nu_1z}
\leq 0, \qquad\forall\, z\geq 0.
$$
Hence
$$
  1-\Big(\frac{\pi^2}{4}\nu_1\sin^2\phi+1\Big)\exp\Big(-\frac{\pi^2}{4}\nu_1\sin^2\phi\Big)\geq 0, \qquad\forall\,\phi\in [0, 2\pi),
$$
and \eqref{second-moment-y} continues as
\begin{align}
\int_{\sS}y^2\exp(-\nu_1x^2)\,\ud{S}&\geq\Big(\frac{4}{\pi}\Big)^3
\int_{\frac{2}{\pi\sqrt\nu_1}}^{\frac{\pi}{4}}\frac{\cos^2\phi}{2\nu_1^2\sin^4\phi}\Big[1-\Big(\frac{\pi^2}{4}\nu_1\sin^2\phi+1\Big)\exp\Big(-\frac{\pi^2}{4}\nu_1\sin^2\phi\Big)\Big]\,\ud\phi\non\\
&\geq\Big(\frac{4}{\pi}\Big)^3
\int_{\frac{2}{\pi\sqrt\nu_1}}^{\frac{\pi}{4}}\frac{1}{4\nu_1^2\sin^4\phi}\Big[1-\Big(\frac{\pi^2}{4}\nu_1\sin^2\phi+1\Big)\exp\Big(-\frac{\pi^2}{4}\nu_1\sin^2\phi\Big)\Big]\,\ud\phi\non\\
&\geq\Big(\frac{4}{\pi}\Big)^3\int_{\frac{2}{\pi\sqrt\nu_1}}^{\frac{\pi}{4}}\frac{1}{4\nu_1^2\sin^4\phi}\Big[1-\Big(\frac{\pi^2}{4}\nu_1\sin^2\frac{2}{\pi\sqrt\nu_1}+1\Big)
\exp\Big(-\frac{\pi^2}{4}\nu_1\sin^2\frac{2}{\pi\sqrt\nu_1}\Big)\Big]\,\ud\phi\non\\
&\geq\Big(\frac{4}{\pi}\Big)^3\int_{\frac{2}{\pi\sqrt\nu_1}}^{\frac{\pi}{4}}\frac{1}{4\nu_1^2\sin^4\phi}\Big[1-2\exp\Big(-\frac{\pi^2}{4}\nu_1\sin^2\frac{2}{\pi\sqrt\nu_1}\Big)\Big]\,\ud\phi.
\end{align}
By \eqref{large-nu-1},
$$
  \lim_{\delta\rightarrow 0}\exp\Big(-\frac{\pi^2}{4}\nu_1\sin^2\frac{2}{\pi\sqrt\nu_1}\Big)=\lim_{\nu_1\rightarrow\infty}\exp\Big(-\frac{\pi^2}{4}\nu_1\sin^2\frac{2}{\pi\sqrt\nu_1}\Big)=\frac{1}{e}.
$$
Thus by Remark \ref{remark-small-eps} there exists a small computable constant $\delta_0>0$, such that
$$
  1-2\exp\Big(-\frac{\pi^2}{4}\nu_1\sin^2\frac{2}{\pi\sqrt\nu_1}\Big)\geq 1-\frac{5}{2e}, \qquad \forall\,\delta<\delta_0.
$$
This  further implies
\begin{align}
\int_{\sS}y^2\exp(-\nu_1x^2)\,\ud{S}&\geq\Big(1-\frac{5}{2e}\Big)\Big(\frac{4}{\pi}\Big)^3\int_{\frac{2}{\pi\sqrt\nu_1}}^{\frac{\pi}{4}}\frac{1}{4\nu_1^2\sin^4\phi}\,\ud\phi
\geq \Big(1-\frac{5}{2e}\Big)\Big(\frac{4}{\pi}\Big)^3\int_{\frac{2}{\pi\sqrt\nu_1}}^{\frac{\pi}{4}}\frac{1}{4\nu_1^2\phi^4}\,\ud\phi \non\\
&\geq \frac{2e-5}{3e\sqrt\nu_1}, \qquad \forall\,\delta<\delta_0.
\end{align}
To sum up, we conclude
$$
  \delta\geq e^{-\nu_2}\frac{\int_{\sS}y^2\exp(-\nu_1x^2)\,\ud{S}}{\int_{\sS}\exp(-\nu_1x^2)\,\ud{S}}
  \geq e^{-\nu_2}\frac{(2e-5)}{3e\sqrt\nu_1}\Big/\frac{8}{\sqrt\nu_1}=\frac{(2e-5)}{24e}e^{-\nu_2}, \qquad\forall\,\delta<\delta_0,
$$
completing the proof.
\end{proof}
\begin{remark}
Lemma \ref{lemma-large-nu-2} implies that as $\lambda_2(Q)\rightarrow -1/3$, that is $\delta\rightarrow 0^+$, we have both
$\nu_1, \nu_2\rightarrow +\infty$.
\end{remark}

Next, we denote
\begin{equation}
A(\phi)=\nu_1\cos^2\phi+\nu_2\sin^2\phi, \qquad 0\leq\phi<2\pi.
\end{equation}
We can establish the following estimates

\begin{lemma}\label{lemma-second-moment-estimate}
There exists a small computable constant $\delta_0>0$ such that
\begin{align}
\frac{1}{\pi}\int_0^{2\pi}\frac{1}{A(\phi)}\,\ud\phi&\leq\int_{\sS}\exp(-\nu_1x^2-\nu_2y^2)\,\ud{S}\leq\frac{\pi^2}{4}\int_0^{2\pi}\frac{1}{A(\phi)}\,\ud\phi,
\qquad\qquad\forall\,\delta<\delta_0,\label{lower-bound-denominator}   \\
\frac{4}{\pi^3}\int_0^{2\pi}\frac{\cos^2\phi}{A^2(\phi)}\,\ud\phi&\leq\int_{\sS}x^2\exp(-\nu_1x^2-\nu_2y^2)\,\ud{S}
\leq\frac{\pi^4}{16}\int_0^{2\pi}\frac{\cos^2\phi}{A^2(\phi)}\,\ud\phi, \qquad\forall\,\delta<\delta_0,\label{lower-bound-numerator-1}   \\
\frac{4}{\pi^3}\int_0^{2\pi}\frac{\sin^2\phi}{A^2(\phi)}\,\ud\phi&\leq\int_{\sS}y^2\exp(-\nu_1x^2-\nu_2y^2)\,\ud{S}
\leq\frac{\pi^4}{16}\int_0^{2\pi}\frac{\sin^2\phi}{A^2(\phi)}\,\ud\phi, \qquad\forall\,\delta<\delta_0. \label{lower-bound-numerator-2}
\end{align}
\end{lemma}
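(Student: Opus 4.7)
The plan is to change variables to spherical coordinates $(x,y,z)=(\sin\theta\cos\phi,\sin\theta\sin\phi,\cos\theta)$, which transforms the exponent into $\nu_1x^2+\nu_2y^2 = A(\phi)\sin^2\theta$. Because all three integrands are invariant under $\theta\mapsto\pi-\theta$, they reduce to an integration over the upper hemisphere $\theta\in[0,\pi/2]$ multiplied by $2$, and the factors $x^2,y^2$ contribute only the $\phi$-weights $\cos^2\phi,\sin^2\phi$ appearing in the claimed bounds. All three estimates therefore boil down to matching upper and lower bounds, uniform in $\phi$, of the two $\theta$-integrals
\[
I_k(A) \;=\; \int_0^{\pi/2}\sin^{2k+1}\!\theta\, e^{-A\sin^2\theta}\,\ud\theta,\qquad k=0,1.
\]

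The heart of the argument is the elementary two-sided bound $2\theta/\pi\leq\sin\theta\leq\theta$ from \eqref{trigonometry-inequality}, applied asymmetrically. For the upper bound on $I_k(A)$ I would use $\sin\theta\geq 2\theta/\pi$ inside the exponential, converting $e^{-A\sin^2\theta}$ to the Gaussian $e^{-4A\theta^2/\pi^2}$, together with $\sin\theta\leq\theta$ in the polynomial factor $\sin^{2k+1}\theta\leq\theta^{2k+1}$. For the lower bound I would flip both choices: $\sin\theta\leq\theta$ in the exponential and $\sin\theta\geq 2\theta/\pi$ in the polynomial factor. Each resulting integral is a monomial against a Gaussian and evaluates in closed form via $u=4A\theta^2/\pi^2$ or $u=A\theta^2$, giving expressions of the shape $C_k A^{-(k+1)}\bigl(1-P_k(A)e^{-A}\bigr)$ for the upper bound and $C_k' A^{-(k+1)}\bigl(1-P_k(A\pi^2/4)e^{-A\pi^2/4}\bigr)$ for the lower bound, with $P_k$ a polynomial of degree at most $k$.

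For the upper bounds the tail factors are bounded by $1$ and can be discarded, which yields immediately the claimed constants $\pi^2/4$ and $\pi^4/16$. The hard part is the lower bounds, where the tail factors $(1-e^{-A\pi^2/4})$ and $(1-(A\pi^2/4+1)e^{-A\pi^2/4})$ must be shown to be at least $1/2$ uniformly in $\phi$; this is where the small parameter $\delta$ enters. Since $A(\phi)=\nu_2+(\nu_1-\nu_2)\cos^2\phi\geq\nu_2$ and Lemma \ref{lemma-large-nu-2} forces $\nu_2\to+\infty$ as $\delta\to 0^+$, I would fix $\delta_0$ small enough that $\nu_2$ exceeds a computable threshold which makes both tail factors at least $1/2$ for every $\phi$. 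Combined with the factor $2$ from the hemisphere doubling, the resulting $1/2$ produces exactly the constants $1/\pi$ and $4/\pi^3$ advertised in \eqref{lower-bound-denominator}--\eqref{lower-bound-numerator-2}. The principal bookkeeping issue is verifying that a single choice of $\delta_0$ works simultaneously for all three lower bounds, but this follows at once from taking $\delta_0$ to be the minimum of the (finitely many and explicitly computable) thresholds coming from the two tail conditions.
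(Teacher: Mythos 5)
Your proposal is correct and follows essentially the same route as the paper: split $\sS$ into upper and lower hemispheres, use the two-sided bound $2\theta/\pi\leq\sin\theta\leq\theta$ asymmetrically in the measure/polynomial factor versus the exponential, evaluate the resulting Gaussian moments in closed form, and use Lemma \ref{lemma-large-nu-2} (via $A(\phi)\geq\nu_2\to\infty$) to make both truncation tails at least $1/2$ for $\delta$ small. The only cosmetic difference is that the paper handles the degree-one tail factor by splitting $(1+z)e^{-z}$ into $ze^{-z}\leq 1/e$ and $e^{-z}\leq\tfrac12-\tfrac1e$ rather than bounding the combination directly, but this is the same estimate.
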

\begin{proof}
To begin with, by \eqref{positivity-nu-2} we have
$$
A(\phi)\geq\nu_2\cos^2\phi+\nu_2\sin^2\phi=\nu_2.
$$
This together with Lemma \ref{lemma-large-nu-2} implies that there exists a small computable constant $\delta_0>0$ such that
\begin{equation}\label{exponential-decay}
e^{\frac{-\pi^2A(t)}{4}}\leq e^{\frac{-\pi^2\nu_2}{4}}\leq\frac12-\frac{1}{e},\qquad\forall\,\delta<\delta_0.
\end{equation}
Using \eqref{trigonometry-inequality} and integration by parts, we obtain
\begin{align*}
\int_{\sS}\exp(-\nu_1x^2-\nu_2y^2)\,\ud{S}&=2\int_0^{2\pi}\int_0^{\frac{\pi}{2}}\sin\theta\exp\big[-A(\phi)\sin^2\theta\big]\,\ud\theta\ud\phi\\
&\leq 2\int_0^{2\pi}\int_0^{\frac{\pi}{2}}\theta\exp\big[-4A(\phi)\pi^{-2}\theta^2\big]\,\ud\theta\ud\phi\\
&=\int_0^{2\pi}\frac{\pi^2-\pi^2e^{-A(\phi)}}{4A(\phi)}\,\ud\phi\leq\frac{\pi^2}{4}\int_0^{2\pi}\frac{1}{A(\phi)}\,\ud\phi,
\end{align*}
and together with \eqref{exponential-decay} we have
\begin{align*}
\int_{\sS}\exp(-\nu_1x^2-\nu_2y^2)\,\ud{S}&=2\int_0^{2\pi}\int_0^{\frac{\pi}{2}}\sin\theta\exp\big[-A(\phi)\sin^2\theta\big]\,\ud\theta\ud\phi\\
&\geq \frac{4}{\pi}\int_0^{2\pi}\int_0^{\frac{\pi}{2}}\theta\exp\big[-A(\phi)\theta^2\big]\,\ud\theta\ud\phi\\
&=\frac{2}{\pi}\int_0^{2\pi}\frac{1}{A(\phi)}\big[1-e^{-A(\phi)\pi^2/4}\big]\,\ud\phi\\
&\geq \frac{1}{\pi}\int_0^{2\pi}\frac{1}{A(\phi)}\,\ud\phi,
\end{align*}
concluding the proof of \eqref{lower-bound-denominator}.

\smallskip

To proceed, using \eqref{trigonometry-inequality} and integration by parts again, we get
\begin{align*}
\int_{\sS}x^2\exp(-\nu_1x^2-\nu_2y^2)\,\ud{S}&=2\int_0^{2\pi}\int_0^{\frac{\pi}{2}}\cos^2\phi\sin^3\theta\exp\big[-A(\phi)\sin^2\theta\big]\,\ud\theta\ud\phi\\
&\leq 2\int_0^{2\pi}\int_0^{\frac{\pi}{2}}\cos^2\phi\;\theta^3\exp\big[-A(\phi)4\pi^{-2}\theta^2\big]\,\ud\theta\ud\phi\\
&=\frac{\pi^4}{16}\int_0^{2\pi}\frac{\cos^2\phi}{A^2(\phi)}\Big\{1-e^{-A(\phi)}\big[A(\phi)+1\big] \Big\}\,\ud\phi \\
&\leq\frac{\pi^4}{16}\int_0^{2\pi}\frac{\cos^2\phi}{A^2(\phi)}\,\ud\phi
\end{align*}
and similarly
\begin{align*}
&\int_{\sS}x^2\exp(-\nu_1x^2-\nu_2y^2)\,\ud{S}\\
&=2\int_0^{2\pi}\int_0^{\frac{\pi}{2}}\cos^2\phi\sin^3\theta\exp\big[-A(\phi)\sin^2\theta\big]\,\ud\theta\ud\phi\\
&\geq 2\Big(\frac{2}{\pi}\Big)^3\int_0^{2\pi}\int_0^{\frac{\pi}{2}}\cos^2\phi\;\theta^3\exp\big[-A(\phi)\theta^2\big]\,\ud\theta\ud\phi\\
&=\Big(\frac{2}{\pi}\Big)^3\int_0^{2\pi}\frac{\cos^2\phi}{A^2(\phi)}\Big\{1-\frac14e^{-\pi^2A(\phi)/4}\big[\pi^2A(\phi)+4\big] \Big\}\,\ud\phi\\
&=\frac{8}{\pi^3}\int_0^{2\pi}\frac{\cos^2\phi}{A^2(\phi)}\,\ud\phi
-\frac{8}{\pi^3}\underbrace{\int_0^{2\pi}\frac{\cos^2\phi}{A^2(\phi)}\frac{\pi^2A(\phi)}{4}\exp\Big[\frac{-\pi^2A(\phi)}{4}\Big]\,\ud\phi}_{I_1}
-\frac{8}{\pi^3}\underbrace{\int_0^{2\pi}\frac{\cos^2\phi}{A^2(\phi)}\exp\Big[\frac{-\pi^2A(\phi)}{4}\Big]\,\ud\phi}_{I_2} {\color{black}.}
\end{align*}
Hence to attain \eqref{lower-bound-numerator-1}, it remains to estimate $I_1$ and $I_2$. First, observe that $ze^{-z}\in [0, 1/e]$ for $z\geq 0$, hence
$$
  I_1\leq \frac{1}{e}\int_0^{2\pi}\frac{\cos^2\phi}{A^2(\phi)}\,\ud\phi.
$$
Besides, it follows from \eqref{exponential-decay} that
$$
  I_2\leq \Big(\frac12-\frac{1}{e}\Big)\int_0^{2\pi}\frac{\cos^2\phi}{A^2(\phi)}\,\ud\phi.
$$
To sum up, we conclude
$$
  \int_{\sS}x^2\exp(-\nu_1x^2-\nu_2y^2)\,\ud{S}\geq \frac{4}{\pi^3}\int_0^{2\pi}\frac{\cos^2\phi}{A^2(\phi)}\,\ud\phi,
$$
hence the proof of \eqref{lower-bound-numerator-1} is complete. The proof of \eqref{lower-bound-numerator-2} is completely analogous to that of
\eqref{lower-bound-numerator-1}.
\end{proof}

As a matter of fact, all the bounds in Lemma \ref{lemma-second-moment-estimate} can be achieved explicitly in terms of $\nu_1, \nu_2$.

\begin{lemma}\label{lemma-antiderivative}
For any $\nu_1, \nu_2>0$, the following identities are satisfied:
\begin{align}
\int_0^{2\pi}\frac{1}{A(\phi)}\,\ud\phi&=4\int_0^{\frac{\pi}{2}}\frac{1}{A(\phi)}\,\ud\phi=\frac{2\pi}{\sqrt{\nu_1\nu_2}}, \\
\int_0^{2\pi}\frac{\sin^2\phi}{A^2(\phi)}\,\ud\phi&=4\int_0^{\frac{\pi}{2}}\frac{\sin^2\phi}{A^2(\phi)}\,\ud\phi=\frac{\pi}{\nu_2\sqrt{\nu_1\nu_2}},\\
\int_0^{2\pi}\frac{\cos^2\phi}{A^2(\phi)}\,\ud\phi&=4\int_0^{\frac{\pi}{2}}\frac{\cos^2\phi}{A^2(\phi)}\,\ud\phi=\frac{\pi}{\nu_1\sqrt{\nu_1\nu_2}}.
\end{align}
\end{lemma}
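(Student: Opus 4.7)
The strategy is to reduce all three identities to a single arctangent evaluation, and then obtain the remaining two by differentiating with respect to the parameters $\nu_1,\nu_2$. First, since $A(\phi)=\nu_1\cos^2\phi+\nu_2\sin^2\phi$ has period $\pi$, is even in $\phi$, and satisfies $A(\pi-\phi)=A(\phi)$, and because the numerators $1,\cos^2\phi,\sin^2\phi$ share the same symmetries, each of the three integrands is invariant under $\phi\mapsto\phi+\pi$ and under $\phi\mapsto\pi-\phi$. These two symmetries together give $\int_0^{2\pi}=4\int_0^{\pi/2}$ in each line, so it suffices to evaluate the integrals on $[0,\pi/2]$.

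Next I would compute the base integral $\int_0^{\pi/2}d\phi/A(\phi)$ via the substitution $t=\tan\phi$. Writing $A(\phi)=\cos^2\phi\,(\nu_1+\nu_2t^2)$ and $d\phi=\cos^2\phi\,dt$, the integral collapses to
\[
\int_0^{\infty}\frac{dt}{\nu_1+\nu_2t^2}=\frac{1}{\sqrt{\nu_1\nu_2}}\arctan\!\Big(t\sqrt{\nu_2/\nu_1}\Big)\bigg|_0^{\infty}=\frac{\pi}{2\sqrt{\nu_1\nu_2}},
\]
which, multiplied by $4$, gives the first identity $2\pi/\sqrt{\nu_1\nu_2}$.

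For the remaining two identities, the key observation is that $\partial A/\partial\nu_1=\cos^2\phi$ and $\partial A/\partial\nu_2=\sin^2\phi$, so
\[
\frac{\cos^2\phi}{A^2(\phi)}=-\frac{\partial}{\partial\nu_1}\frac{1}{A(\phi)},\qquad\frac{\sin^2\phi}{A^2(\phi)}=-\frac{\partial}{\partial\nu_2}\frac{1}{A(\phi)}.
\]
Since $A(\phi)\ge\min(\nu_1,\nu_2)>0$ uniformly in $\phi$ on any compact neighborhood of a fixed $(\nu_1,\nu_2)\in(0,\infty)^2$, the integrands and their $\nu_i$-derivatives are dominated by an integrable function, so differentiation under the integral sign is legitimate. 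Applying $-\partial_{\nu_1}$ and $-\partial_{\nu_2}$ to the identity $\int_0^{2\pi}d\phi/A(\phi)=2\pi/\sqrt{\nu_1\nu_2}$ just established produces $\pi/(\nu_1\sqrt{\nu_1\nu_2})$ and $\pi/(\nu_2\sqrt{\nu_1\nu_2})$ respectively, yielding the remaining two identities.

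I anticipate no real obstacle: the lemma is a purely elementary computation. The only minor point demanding care is the interchange of $\partial_{\nu_i}$ with the integral, which is immediate from the uniform positivity of $A$ and the continuity of the integrand in $(\phi,\nu_1,\nu_2)$.
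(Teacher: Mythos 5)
Your proof is correct, and it takes a genuinely different route from the paper. The paper computes all three integrals by finding explicit antiderivatives: after the substitution $u=\sqrt{\nu_2/\nu_1}\tan\phi$ it writes out closed-form primitives for $\sin^2\phi/A^2$ and $\cos^2\phi/A^2$ (each involving an $\arctan$ term plus a rational term that vanishes at the endpoints) and evaluates them on $[0,\pi/2]$. You instead compute only the base integral $\int_0^{2\pi} \ud\phi/A(\phi)=2\pi/\sqrt{\nu_1\nu_2}$ by the same $\tan$-substitution, observe that $\cos^2\phi/A^2=-\partial_{\nu_1}(1/A)$ and $\sin^2\phi/A^2=-\partial_{\nu_2}(1/A)$, and obtain the other two identities by differentiating under the integral sign, justified by the uniform lower bound $A\ge\min(\nu_1,\nu_2)>0$ on compact parameter sets. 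Your approach is shorter and avoids the bookkeeping of the extra primitive $I_1$ in the paper; the paper's approach is entirely self-contained at the level of single-variable calculus and doesn't require invoking a dominated-convergence argument to interchange $\partial_{\nu_i}$ with the integral, though as you note that interchange is immediate here. Both arguments are elementary and both are valid.
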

\begin{proof}
The proof relies on direct derivation of these anti-derivatives. Note that
\begin{align*}
\int\frac{1}{A(\phi)}\,\ud\phi&=\int\frac{\sec^2\phi}{\nu_1+\nu_2\tan^2\phi}\,\ud\phi
=\frac{1}{\sqrt{\nu_1\nu_2}}\int\frac{1}{1+\big(\sqrt{\frac{\nu_2}{\nu_1}}\tan\phi\big)^2}\,\ud\Big(\sqrt{\frac{\nu_2}{\nu_1}}\tan\phi\Big)\\
&=\frac{1}{\sqrt{\nu_1\nu_2}}\arctan\Big(\sqrt{\frac{\nu_2}{\nu_1}}\tan\phi\Big)+C.
\end{align*}
Hence
$$
\int_0^{2\pi}\frac{1}{A(\phi)}\,\ud\phi=4\int_0^{\frac{\pi}{2}}\frac{1}{A(\phi)}\,\ud\phi
=\frac{4}{\sqrt{\nu_1\nu_2}}\arctan\Big(\sqrt{\frac{\nu_2}{\nu_1}}\tan\phi\Big)\Big|_{0}^{\frac{\pi}{2}}=\frac{2\pi}{\sqrt{\nu_1\nu_2}}.
$$
Next,
\begin{align*}
&\int\frac{\sin^2\phi}{A^2(\phi)}\,\ud\phi\\
&=\int\frac{\tan^2\phi\sec^2\phi}{(\nu_1+\nu_2\tan^2\phi)^2}\,\ud\phi
=\frac{1}{\nu_2}\int\frac{(\nu_1+\nu_2\tan^2\phi)\sec^2\phi}{(\nu_1+\nu_2\tan^2\phi)^2}\,\ud\phi-\frac{\nu_1}{\nu_2}\int\frac{\sec^2\phi}{(\nu_1+\nu_2\tan^2\phi)^2}\,\ud\phi\\
&=\frac{1}{\sqrt{\nu_1\nu_2}\nu_2}\int\frac{1}{1+\big(\sqrt{\frac{\nu_2}{\nu_1}}\tan\phi\big)^2}\,\ud\Big(\sqrt{\frac{\nu_2}{\nu_1}}\tan\phi\Big)
-\frac{1}{\sqrt{\nu_1\nu_2}\nu_2}\int\frac{1}{\Big[1+\big(\sqrt{\frac{\nu_2}{\nu_1}}\tan\phi\big)^2\Big]^2}\,\ud\Big(\sqrt{\frac{\nu_2}{\nu_1}}\tan\phi\Big)\\
&=\frac{1}{\sqrt{\nu_1\nu_2}\nu_2}\arctan\Big(\sqrt{\frac{\nu_2}{\nu_1}}\tan\phi\Big)
-\frac{1}{\sqrt{\nu_1\nu_2}\nu_2}\underbrace{\int\frac{1}{\Big[1+\big(\sqrt{\frac{\nu_2}{\nu_1}}\tan\phi\big)^2\Big]^2}\,\ud\Big(\sqrt{\frac{\nu_2}{\nu_1}}\tan\phi\Big)}_{I_1}.
\end{align*}
By setting $u=\sqrt{\frac{\nu_2}{\nu_1}}\tan\phi$ we obtain
\begin{align*}
I_1&=\int\frac{1}{(1+u^2)^2}\,\ud{u}\overset{(\theta=\tan u)}{=}\int\cos^2\theta\,\ud\theta=\frac{\sin(2\theta)}{4}+\frac{\theta}{2}+C=\frac12\frac{u}{1+u^2}+\frac{\arctan{u}}{2}+C.
\end{align*}
Thus
\begin{align*}
\int\frac{\sin^2\phi}{A^2(\phi)}\,\ud\phi=\frac{1}{2\sqrt{\nu_1\nu_2}\nu_2}\arctan\Big(\sqrt{\frac{\nu_2}{\nu_1}}\tan\phi\Big)
-\frac{1}{2\nu_2}\frac{\cos\phi\sin\phi}{\nu_1\cos^2\phi+\nu_2\sin^2\phi}+C,
\end{align*}
and henceforth
\begin{align*}
\int_0^{2\pi}\frac{\sin^2\phi}{A^2(\phi)}\,\ud\phi=4\int_0^{\frac{\pi}{2}}\frac{\sin^2\phi}{A^2(\phi)}\,\ud\phi
=\frac{2}{\sqrt{\nu_1\nu_2}\nu_2}\arctan\Big(\sqrt{\frac{\nu_2}{\nu_1}}\tan\phi\Big)\bigg|_0^{\frac{\pi}{2}}=\frac{\pi}{\sqrt{\nu_1\nu_2}\nu_2}.
\end{align*}
Similarly,
\begin{align*}
\int\frac{\cos^2\phi}{A^2(\phi)}\,\ud\phi&=\int\frac{\sec^2\phi}{(\nu_1+\nu_2\tan^2\phi)^2}\,\ud\phi
=\frac{1}{\nu_1\sqrt{\nu_1\nu_2}}\underbrace{\int\frac{1}{\Big[1+\big(\sqrt{\frac{\nu_2}{\nu_1}}\tan\phi\big)^2\Big]^2}\,\ud\Big(\sqrt{\frac{\nu_2}{\nu_1}}\tan\phi\Big)}_{I_1}\\
&=\frac{1}{2\sqrt{\nu_1\nu_2}\nu_1}\arctan\Big(\sqrt{\frac{\nu_2}{\nu_1}}\tan\phi\Big)+\frac{1}{2\nu_1}\frac{\cos\phi\sin\phi}{\nu_1\cos^2\phi+\nu_2\sin^2\phi}+C.
\end{align*}
Hence
\begin{align*}
\int_0^{2\pi}\frac{\cos^2\phi}{A^2(\phi)}\,\ud\phi=4\int_0^{\frac{\pi}{2}}\frac{\cos^2\phi}{A^2(\phi)}\,\ud\phi
=\frac{2}{\sqrt{\nu_1\nu_2}\nu_1}\arctan\Big(\sqrt{\frac{\nu_2}{\nu_1}}\tan\phi\Big)\bigg|_0^{\frac{\pi}{2}}=\frac{\pi}{\sqrt{\nu_1\nu_2}\nu_1}.
\end{align*}

\end{proof}

By virtue of Lemma \ref{lemma-large-nu-2}, Lemma \ref{lemma-second-moment-estimate}, and Lemma \ref{lemma-antiderivative}, we are ready to prove the lower bound \eqref{lower-bound-new}
in Theorem \ref{theorem-upper-bound}.

\begin{proof}[Proof of \eqref{lower-bound-new}]
Let $\delta_0>0$ be the minimum threshold from the previous lemmas.
By \eqref{second-moment-new-1}, \eqref{second-moment-new-2}, Lemma \ref{lemma-second-moment-estimate}, and Lemma \ref{lemma-antiderivative} we have
\begin{align}
\eps&=\frac{\int_{\sS}x^2\exp(-\nu_1x^2-\nu_2y^2)\,\ud{S}}{\int_{\sS}\exp(-\nu_1x^2-\nu_2y^2)\,\ud{S}}
\leq\dfrac{\frac{\pi^4}{16}\int_0^{2\pi}\frac{\cos^2\phi}{A^2(\phi)}\,\ud\phi}{\frac{1}{\pi}\int_0^{2\pi}\frac{1}{A(\phi)}\,\ud\phi}=\frac{\pi^5}{32}\frac{1}{\nu_1},
\label{key-bound-1}\\
\eps&=\frac{\int_{\sS}x^2\exp(-\nu_1x^2-\nu_2y^2)\,\ud{S}}{\int_{\sS}\exp(-\nu_1x^2-\nu_2y^2)\,\ud{S}}
\geq\dfrac{\frac{4}{\pi^3}\int_0^{2\pi}\frac{\cos^2\phi}{A^2(\phi)}\,\ud\phi}{\frac{\pi^2}{4}\int_0^{2\pi}\frac{1}{A(\phi)}\,\ud\phi}=\frac{8}{\pi^5}\frac{1}{\nu_1},
\label{key-bound-2}\\
\delta&=\frac{\int_{\sS}y^2\exp(-\nu_1x^2-\nu_2y^2)\,\ud{S}}{\int_{\sS}\exp(-\nu_1x^2-\nu_2y^2)\,\ud{S}}
\leq\dfrac{\frac{\pi^4}{16}\int_0^{2\pi}\frac{\sin^2\phi}{A^2(\phi)}\,\ud\phi}{\frac{1}{\pi}\int_0^{2\pi}\frac{1}{A(\phi)}\,\ud\phi}=\frac{\pi^5}{32}\frac{1}{\nu_2},
\label{key-bound-3}\\
\delta&=\frac{\int_{\sS}y^2\exp(-\nu_1x^2-\nu_2y^2)\,\ud{S}}{\int_{\sS}\exp(-\nu_1x^2-\nu_2y^2)\,\ud{S}}
\geq\dfrac{\frac{4}{\pi^3}\int_0^{2\pi}\frac{\sin^2\phi}{A^2(\phi)}\,\ud\phi}{\frac{\pi^2}{4}\int_0^{2\pi}\frac{1}{A(\phi)}\,\ud\phi}=\frac{8}{\pi^5}\frac{1}{\nu_2}.
\label{key-bound-4}
\end{align}
These estimates \eqref{key-bound-1}-\eqref{key-bound-4} together with \eqref{Boltzmann-distribution}, \eqref{def-nu-1}, Lemma \ref{lemma-second-moment-estimate}, and Lemma \ref{lemma-antiderivative} yield
\begin{align}\label{estimate-f}
f(Q)&=-\ln{Z}+\sum_{i=1}^3\mu_i\big(\lambda_i+\frac13\big)=-\ln{Z}+\eps\mu_1+\delta\mu_2+(1-\eps-\delta)(-\mu_1-\mu_2)\non\\
&=-\ln{Z}-\eps\nu_1-\delta\nu_2-(\mu_1+\mu_2) \non\\
&=-\ln\Big[\int_{\sS}\exp(\mu_1+\mu_2)\exp(\mu_1x^2+\mu_2y^2+\mu_3z^2)\,\ud{S}\Big]-\eps\nu_1-\delta\nu_2 \non\\
&=-\ln\int_{\sS}\exp(-\nu_1x^2-\nu_2y^2)\,\ud{S}-\eps\nu_1-\delta\nu_2=-\ln\Big[\frac{\pi^2}{4}\int_0^{2\pi}\frac{1}{A(\phi)}\,\ud\phi\Big]-\eps\nu_1-\delta\nu_2\non\\
&\geq\frac12\ln(\nu_1\nu_2)-\ln{2\pi}-\ln\frac{\pi^2}{4}-\frac{\pi^5}{16}\non\\
&\geq -\frac12\ln(\eps\delta)+\ln{16}-8\ln\pi-\frac{\pi^5}{16}, \qquad\qquad\forall\,\delta<\delta_0,
\end{align}
concluding the proof of \eqref{lower-bound-new}.
\end{proof}

\begin{remark}
Actually, \eqref{estimate-f} also provides a proof of the upper bound of the order
$$
  C-\frac12\ln\Big(\lambda_1(Q)+\frac13\Big)-\frac12\ln\Big(\lambda_2(Q)+\frac13\Big)
$$
for an explicit constant $C$. However, it requires $\lambda_2(Q)$ to be sufficiently close to $-1/3$, which is not necessary in the argument regarding the upper bound of $f$
provided in the previous subsection.
\end{remark}

\section{Blowup rate of $\nabla_{\Q}{f}$}

In this section we shall finish proof of Theorem \ref{theorem-blowup-rate}, which plays a crucial role
in the proofs regarding regularity results of the relevant solutions
in both static and dynamic configurations \cite{EKT16, GT19, LLX20}. The argument in the proof of \eqref{lower-bound-new} is no longer valid here
in that now we only assume $\lambda_1(Q)$ is sufficiently close to $-1/3$.

\smallskip

To this end, we shall compute all five components of $\nabla_{\Q}f(Q)$, and state that only the ``radial" and ``tangential" components of $\nabla_{\Q}f$ are nonzero. Then we will be focused on
the estimate of its ``radial" component only.

\begin{lemma}\label{lemma-rate-gradient}
For any physical $Q$-tensor of the form \eqref{Q-diagonal}, it holds
\begin{align}
\nabla_{rad}f(Q)&=\sqrt{\frac23}\frac{\mathrm{d}{f}(Q_\eps^\bot)}{\mathrm{d}\eps}\bigg|_{\eps=0}=-\sqrt{\frac32}\mu_1,\label{radial-component}\\
\nabla_{tan}f(Q)&=\sqrt{\frac12}\frac{\mathrm{d}{f}(Q_\eps^\|)}{\mathrm{d}\eps}\bigg|_{\eps=0}=\sqrt{\frac12}(\mu_1+2\mu_2).\label{tangential-component}
\end{align}
Here $\nabla_{rad}f(Q)$ (resp. $\nabla_{tan}f(Q)$) is defined in \eqref{radial-def} (resp. in \eqref{tangential-def}) below.
\end{lemma}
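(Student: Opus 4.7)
The plan is to identify $\nabla_{\Q}f(Q)$ in closed form for diagonal $Q$ by exploiting the Lagrangian structure underlying \eqref{def-singluar-potential}. I would first substitute the optimizer \eqref{Boltzmann-distribution} back into the entropy integral; using $\mu_1+\mu_2+\mu_3=0$ this yields the dual representation
\[
f(Q)=\sum_{i=1}^{3}\mu_{i}\Big(\lambda_{i}+\tfrac{1}{3}\Big)-\ln Z(\mu_{1},\mu_{2},\mu_{3})=\sum_{i=1}^{3}\mu_i\lambda_i-\ln Z.
\]
Differentiating in $Q$, the envelope theorem kills the contributions from the implicit dependence $\mu_i(Q)$: they cancel by the moment identity \eqref{second-moment-original}. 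First-order perturbation theory for eigenvalues of a symmetric matrix then gives $\partial\lambda_i/\partial Q_{jk}\big|_{Q\text{ diagonal}}=\delta_{ij}\delta_{ik}$, so that $\partial f/\partial Q|_{Q\text{ diagonal}}=\mathrm{diag}(\mu_1,\mu_2,\mu_3)=:M$. Since $\mathrm{tr}(M)=0$, no trace correction is required and $\nabla_{\Q}f(Q)=M$; in particular the three off-diagonal components of $\nabla_{\Q}f(Q)$ are zero. (Equivalently, the off-diagonal vanishing follows from rotation invariance $f(RQR^T)=f(Q)$ applied to the sign-flip rotations $R=\mathrm{diag}(\pm1,\pm1,\pm1)$, which forces each off-diagonal component of $\partial f/\partial Q$ to be odd in the corresponding off-diagonal entry of $Q$.)

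Once $\nabla_{\Q}f(Q)=M$ is in hand, \eqref{radial-component}--\eqref{tangential-component} reduce to Frobenius inner products of $M$ with the perturbation directions of $Q_\eps^\bot$ and $Q_\eps^\|$. I expect the radial path to change the eigenvalues at the trace-preserving rates $(-1,\tfrac{1}{2},\tfrac{1}{2})$ (the unique diagonal traceless direction pushing $\lambda_1$ toward $-1/3$ while keeping $\lambda_2,\lambda_3$ symmetric), so that
\[
\frac{df(Q_\eps^\bot)}{d\eps}\bigg|_{\eps=0}=-\mu_1+\tfrac{1}{2}(\mu_2+\mu_3)=-\mu_1-\tfrac{1}{2}\mu_1=-\tfrac{3}{2}\mu_1,
\]
after using $\mu_2+\mu_3=-\mu_1$; multiplying by $\sqrt{2/3}$ reproduces \eqref{radial-component}. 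The tangential path should change the eigenvalues at rates $(0,1,-1)$, leading to $df(Q_\eps^\|)/d\eps|_{\eps=0}=\mu_2-\mu_3=\mu_1+2\mu_2$, and multiplication by $\sqrt{1/2}$ gives \eqref{tangential-component}. The prefactors $\sqrt{2/3}$ and $\sqrt{1/2}$ are precisely the reciprocals $1/\sqrt{3/2}$ and $1/\sqrt{2}$ of the Frobenius norms of the two perturbation directions, which is why $\nabla_{rad}f$ and $\nabla_{tan}f$ are the usual directional derivatives along the corresponding unit vectors.

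The principal subtlety is the envelope-theorem step, which requires the map $Q\mapsto\mu(Q)$ to be differentiable on the interior of $\mathcal{D}(f)$. This follows from strict convexity of $\ln Z$, which makes the moment map $\mu\mapsto(\partial_{\mu_i}\ln Z)_i$ a local diffeomorphism via the implicit function theorem, and is consistent with the smoothness of $f$ on $\mathcal{D}(f)$ quoted at the start of Section~2. Granted this, everything else is a short direct calculation; no delicate estimate of the sort needed in Section~2 is required here.
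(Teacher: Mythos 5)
Your proposal is correct and follows essentially the same route as the paper: both pass to the dual representation $f(Q)=\sum_i\mu_i(\lambda_i+\tfrac13)-\ln Z$ (equivalently $\sum_i\mu_i\lambda_i-\ln Z$ since $\sum\mu_i=0$), and both invoke the envelope/first-order-optimality cancellation via the moment identity \eqref{second-moment-original}, arriving at $-\tfrac32\mu_1$ and $\mu_2-\mu_3=\mu_1+2\mu_2$ after normalizing by the Frobenius lengths $\sqrt{3/2}\,\eps$ and $\sqrt{2}\,\eps$. The organizational difference is that you first extract the full matrix gradient $\nabla_{\Q}f(Q)=\mathrm{diag}(\mu_1,\mu_2,\mu_3)$ and then read off the two nonzero components as projections, whereas the paper differentiates $f$ directly along each of the five orthogonal paths $Q_\eps^{(i)}$; the two are interchangeable. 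One small caution on your primary argument for the off-diagonal vanishing: the identity $\partial\lambda_i/\partial Q_{jk}\big|_{Q\ \text{diagonal}}=\delta_{ij}\delta_{ik}$ fails at points of eigenvalue degeneracy (if $\lambda_1=\lambda_2$, the ordered eigenvalues behave like $\lambda\mp|\eps|$ under an off-diagonal perturbation and are not differentiable at $\eps=0$), so the chain-rule computation is formally incomplete there -- this is precisely the case the paper isolates in its Step~3, Case~2, and resolves using $\mu_1=\mu_2$ from Lemma~\ref{lemma-orders-mu}. Your parenthetical rotation-invariance argument ($f(RQR^T)=f(Q)$ with $R=\mathrm{diag}(\pm1,\pm1,\pm1)$ makes $\eps\mapsto f(Q+\eps(e_i\otimes e_j+e_j\otimes e_i))$ even for diagonal $Q$) sidesteps the degeneracy entirely and is actually cleaner than the paper's explicit casework, so with that as the operative argument the proof is complete.
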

\begin{proof}
Recall from \cite[Lemma~C.1]{GT19} that for a given physical $Q$-tensor of the form \eqref{Q-diagonal},
its projection on the physical boundary is
$$
  Q^\bot=\left(\begin{array}{ccc}
           -\frac13 & 0 & 0 \\
           0 & \lambda_2+\frac{\lambda_1+\frac13}{2} & 0 \\
           0 & 0 & \lambda_3+\frac{\lambda_1+\frac13}{2}
         \end{array}\right),
$$
and their distance is
$$
  d(Q)\defeq |Q-Q^\bot|=\frac{\sqrt{6}}{2}\big(\lambda_1+\frac13\big).
$$
Let us introduce
\begin{align*}
Q_\eps^{(1)}\defeq\left(\begin{array}{ccc}
           \lambda_1-\eps & 0 & 0 \\
           0 & \lambda_2+\frac{\eps}{2} & 0 \\
           0 & 0 & \lambda_3+\frac{\eps}{2}
         \end{array}\right),\qquad
Q_\eps^{(2)}\defeq\left(\begin{array}{ccc}
           \lambda_1 & 0 & 0 \\
           0 & \lambda_2+\eps & 0 \\
           0 & 0 & \lambda_3-\eps
         \end{array}\right), \\
Q_\eps^{(3)}\defeq\left(\begin{array}{ccc}
           \lambda_1 & \eps & 0 \\
           \eps & \lambda_2 & 0 \\
           0 & 0 & \lambda_3
         \end{array}\right),\qquad
Q_\eps^{(4)}\defeq\left(\begin{array}{ccc}
           \lambda_1 & 0 & \eps \\
           0 & \lambda_2 & 0 \\
           \eps & 0 & \lambda_3
         \end{array}\right),\qquad
Q_\eps^{(5)}\defeq\left(\begin{array}{ccc}
           \lambda_1 & 0 & 0 \\
           0 & \lambda_2 & \eps \\
           0 & \eps & \lambda_3
         \end{array}\right).
\end{align*}
And for the sake of convenience, we refer to $Q_\eps^{(1)}-Q$ (resp. $Q_\eps^{(2)}-Q$) as the radial direction (resp. tangential direction). Note that these five directions $Q_\eps^{(i)}-Q$, $1\leq i\leq 5$
are orthogonal to one another in the sense that their inner product
$$
  \big(Q_\eps^{(i)}-Q\big): \big(Q_\eps^{(j)}-Q\big)=0, \qquad 1\leq i\neq j\leq 5.
$$
\noindent\textbf{Step 1: radial component.} We first calculate
\begin{equation}
\nabla_{rad}f(Q)\defeq\displaystyle\lim_{\eps\rightarrow 0}\frac{f(Q_\eps^{(1)})-f(Q)}{|Q_\eps^{(1)}-Q|}=\sqrt{\frac23}\frac{\mathrm{d}{f}(Q_\eps^{(1)})}{\mathrm{d}\eps}\bigg|_{\eps=0}.\label{radial-def}
\end{equation}
Let us denote $\rho^{(1)}_\eps$ the associated Boltzmann distribution function of $f(Q_\eps^{(1)})$:
\begin{align*}
\rho^{(1)}_\eps&=\frac{\exp\big\{\mu^{(1)}_1(\eps)x^2+\mu^{(1)}_2(\eps)y^2+\mu^{(1)}_3(\eps)z^2\big\}}{Z^{(1)}_\eps}, \quad(x,y,z)\in\sS, \\
Z^{(1)}_\eps&=\int_{\sS}\exp\big\{\mu^{(1)}_1(\eps)x^2+\mu^{(1)}_2(\eps)y^2+\mu^{(1)}_3(\eps)z^2\big\}\,\ud{S},
\quad\mu^{(1)}_1(\eps)+\mu^{(1)}_2(\eps)+\mu^{(1)}_3(\eps)=0,\\
\frac{1}{Z_\eps^{(1)}}\frac{\partial Z_\eps^{(1)}}{\partial\mu_i^{(1)}}&=\lambda_i^{(1)}(\eps)+\frac13, \qquad 1\leq i\leq 3.
\end{align*}
Here $\lambda_i^{(1)}(\eps)'s$, $1\leq i\leq 3$ are eigenvalues of $Q_{\eps}^{(1)}$.
Clearly, $\rho_0$ (resp. $\mu_i$, i = 1, 2, 3) is the optimal Boltzmann distribution \eqref{Boltzmann-distribution} (resp. Lagrange multipliers)
associated to $Q$. Note that
$$
  \lambda^{(1)}_1(\eps)=\lambda_1-\eps, \quad\lambda^{(1)}_2(\eps)=\lambda_2+\frac{\eps}{2},\quad\lambda^{(1)}_3(\eps)=\lambda_3+\frac{\eps}{2}.
$$
As a consequence, direct computations give
\begin{align*}
\frac{\mathrm{d}f(Q^{(1)}_\eps)}{\mathrm{d}\eps}
&=-\frac{\mathrm{d}\ln Z^{(1)}_\eps}{\mathrm{d}\eps}+\frac{\mathrm{d}}{\mathrm{d}\eps}\sum_{i=1}^3\mu_i^{(1)}(\eps)\Big[\lambda^{(1)}_i(\eps)+\frac13\Big]\\
&=\underbrace{-\frac{\partial\ln Z^{(1)}_\eps}{\partial\mu_i^{(1)}(\eps)}\frac{\mathrm{d}}{\mathrm{d}\eps}\mu_i^{(1)}(\eps)
+\sum_{i=1}^3\frac{\mathrm{d}}{\mathrm{d}\eps}\mu_i^{(1)}(\eps)\Big[\lambda^{(1)}_i(\eps)+\frac13\Big]}_{=0}
+\sum_{i=1}^3\mu_i^{(1)}(\eps)\frac{\mathrm{d}}{\mathrm{d}\eps}\lambda^{(1)}_i(\eps)\\
&=-\mu_1^{(1)}(\eps)+\frac{\mu_2^{(1)}(\eps)+\mu_3^{(1)}(\eps)}{2},
\end{align*}
which gives \eqref{radial-component} after evaluating at $\eps=0$.

\medskip

\noindent\textbf{Step 2: tangential component.}
We proceed to calculate
\begin{equation}
\nabla_{tan}f(Q)\defeq\displaystyle\lim_{\eps\rightarrow 0}\frac{f(Q_\eps^{(2)})-f(Q)}{\big|Q_\eps^{(2)}-Q\big|}
=\sqrt{\frac23}\frac{\mathrm{d}f(Q_\eps^{(2)})}{\mathrm{d}\eps}\bigg|_{\eps=0}.\label{tangential-def}
\end{equation}
Analogously, we denote $\rho^{(2)}_\eps$ the associated Boltzmann distribution function of $f(Q_\eps^{(2)})$:
\begin{align*}
\rho^{(2)}_\eps&=\frac{\exp\big\{\mu^{(2)}_1(\eps)x^2+\mu^{(2)}_2(\eps)y^2+\mu^{(2)}_3(\eps)z^2\big\}}{Z^{(2)}_\eps}, \quad(x,y,z)\in\sS, \\
Z^{(2)}_\eps&=\int_{\sS}\exp\big\{\mu^{(2)}_1(\eps)x^2+\mu^{(2)}_2(\eps)y^2+\mu^{(2)}_3(\eps)z^2\big\}\,\ud{S}, \quad\mu^{(2)}_1(\eps)+\mu^{(2)}_2(\eps)+\mu^{(2)}_3(\eps)=0,\\
\frac{1}{Z_\eps^{(2)}}\frac{\partial Z_\eps^{(2)}}{\partial\mu_i^{(2)}}&=\lambda_i^{(2)}(\eps)+\frac13, \qquad 1\leq i\leq 3.
\end{align*}
Here $\lambda_i^{(2)}(\eps)'s$, $1\leq i\leq 3$ are eigenvalues of $Q_{\eps}^{(2)}$.
Apparently $\rho_0$ (resp. $\mu_i$, i = 1, 2, 3) is the optimal Boltzmann distribution \eqref{Boltzmann-distribution} (resp. Lagrange multipliers)
associated to $Q$. Note that
$$
  \lambda^{(2)}_1(\eps)=\lambda_1, \quad\lambda^{(2)}_2(\eps)=\lambda_2+\eps,\quad\lambda^{(2)}_3(\eps)=\lambda_3-\eps.
$$
Then direct computations give
\begin{align*}
\frac{\mathrm{d}f(Q^{(2)}_\eps)}{\mathrm{d}\eps}
&=\underbrace{-\frac{\partial\ln Z^{(2)}_\eps}{\partial\mu_i^{(2)}(\eps)}\frac{\mathrm{d}}{\mathrm{d}\eps}\mu_i^{(2)}(\eps)
+\sum_{i=1}^3\frac{\mathrm{d}}{\mathrm{d}\eps}\mu_i^{(2)}(\eps)\Big[\lambda^{(2)}_i(\eps)+\frac13\Big]}_{=0}
+\sum_{i=1}^3\mu_i^{(2)}(\eps)\frac{\mathrm{d}}{\mathrm{d}\eps}\lambda_i^{(2)}(\eps)=\mu_2^{(2)}(\eps)-\mu_3^{(2)}(\eps),
\end{align*}
which leads to \eqref{tangential-component} after evaluating at $\eps=0$.

\medskip

\noindent\textbf{Step 3: other components.} We show that all three other components are identically zero. We first calculate
\begin{equation}
\nabla_{3}f(Q)\defeq\displaystyle\lim_{\eps\rightarrow 0}\frac{f(Q_\eps^{(3)})-f(Q)}{\big|Q_\eps^{(3)}-Q\big|}
=\frac{\sqrt{2}}{2}\frac{\mathrm{d}f(Q_\eps^{(3)})}{\mathrm{d}\eps}\bigg|_{\eps=0}.
\end{equation}
Analogously, we denote $\rho^{(3)}_\eps$ the associated Boltzmann distribution function of $f(Q_\eps^{(3)})$:
\begin{align*}
\rho^{(3)}_\eps&=\frac{\exp\big\{\mu^{(3)}_1(\eps)x^2+\mu^{(3)}_2(\eps)y^2+\mu^{(3)}_3(\eps)z^2\big\}}{Z^{(3)}_\eps}, \quad(x,y,z)\in\sS, \\
Z^{(3)}_\eps&=\int_{\sS}\exp\big\{\mu^{(3)}_1(\eps)x^2+\mu^{(3)}_2(\eps)y^2+\mu^{(3)}_3(\eps)z^2\big\}\,\ud{S}, \quad\mu^{(3)}_1(\eps)+\mu^{(3)}_2(\eps)+\mu^{(3)}_3(\eps)=0,\\
\frac{1}{Z_\eps^{(3)}}\frac{\partial Z_\eps^{(3)}}{\partial\mu_i^{(3)}}&=\lambda_i^{(3)}(\eps)+\frac13, \qquad 1\leq i\leq 3.
\end{align*}
Here $\lambda_i^{(3)}(\eps)'s$, $1\leq i\leq 3$ are eigenvalues of $Q_{\eps}^{(3)}$. Once again, $\rho_0$ (resp. $\mu_i$, i = 1, 2, 3) is the optimal Boltzmann distribution \eqref{Boltzmann-distribution} (resp. Lagrange multipliers)
associated to $Q$. Note that
$$
  \lambda^{(3)}_1(\eps)=\frac{\lambda_1+\lambda_2-\sqrt{(\lambda_1-\lambda_2)^2+\eps^2}}{2}, \quad
  \lambda^{(3)}_2(\eps)=\frac{\lambda_1+\lambda_2+\sqrt{(\lambda_1-\lambda_2)^2+\eps^2}}{2},\quad\lambda^{(3)}_3(\eps)=\lambda_3.
$$
Then direct computations give
\begin{align*}
\frac{\mathrm{d}f(Q^{(3)}_\eps)}{\mathrm{d}\eps}
&=\underbrace{-\frac{\partial\ln Z^{(3)}_\eps}{\partial\mu_i^{(3)}(\eps)}\frac{\mathrm{d}}{\mathrm{d}\eps}\mu_i^{(3)}(\eps)
+\sum_{i=1}^3\frac{\mathrm{d}}{\mathrm{d}\eps}\mu_i^{(3)}(\eps)\Big[\lambda^{(3)}_i(\eps)+\frac13\Big]}_{=0}
+\sum_{i=1}^3\mu_i^{(3)}(\eps)\frac{\mathrm{d}}{\mathrm{d}\eps}\lambda_i^{(3)}(\eps) \\
&=\mu_1^{(3)}(\eps)\frac{\mathrm{d}}{\mathrm{d}\eps}\lambda_1^{(3)}(\eps)+\mu_2^{(3)}(\eps)\frac{\mathrm{d}}{\mathrm{d}\eps}\lambda_2^{(3)}(\eps).
\end{align*}
\noindent{Case 1:} $\lambda_1\neq\lambda_2$. Then
$$
  \frac{\mathrm{d}f(Q^{(3)}_\eps)}{\mathrm{d}\eps}\bigg|_{\eps=0}=-\frac{\mu_1^{(3)}(\eps)}{2}\frac{\eps}{\sqrt{(\lambda_1-\lambda_2)^2+\eps^2}}\bigg|_{\eps=0}
  +\frac{\mu_2^{(3)}(\eps)}{2}\frac{\eps}{\sqrt{(\lambda_1-\lambda_2)^2+\eps^2}}\bigg|_{\eps=0}=0
$$
\noindent{Case 2:} $\lambda_1=\lambda_2$. Then $\lambda^{(3)}_1(\eps)=\lambda_1-\eps$, $\lambda^{(3)}_2(\eps)=\lambda_1+\eps$, and correspondingly
$$
  \frac{\mathrm{d}f(Q^{(3)}_\eps)}{\mathrm{d}\eps}\bigg|_{\eps=0}=-\mu_1+\mu_2=0,
$$
due to {\color{black}\eqref{lemma-orders-mu}} and the assumption $\lambda_1=\lambda_2$. In a similar way we can show that the other two components are identically zero.
Hence $\nabla{f}(Q)-1/3\tr(\nabla{f})\mathbb{I}_3$ depends on $\nabla_{rad}f(Q)$ and $\nabla_{tan}f(Q)$ only.
\end{proof}

As an immediate consequence, we have

\begin{corollary}\label{corollary-ratio}
For any physical $Q$-tensor of the form \eqref{Q-diagonal}, it holds
\begin{equation}
1\leq\frac{\big|\nabla_{\Q}f(Q)\big|}{|\nabla_{rad}f(Q)|}\leq 2.
\end{equation}
\end{corollary}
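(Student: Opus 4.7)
The plan is to leverage Lemma \ref{lemma-rate-gradient} directly. Since that lemma identifies the five orthogonal directions $Q_\eps^{(i)}-Q$ and shows that $\nabla_\Q f(Q)$ has vanishing components along $Q_\eps^{(i)}-Q$ for $i=3,4,5$, the first step is to record the Pythagorean identity
\begin{equation*}
\big|\nabla_{\Q}f(Q)\big|^2=\big|\nabla_{rad}f(Q)\big|^2+\big|\nabla_{tan}f(Q)\big|^2,
\end{equation*}
which follows from the orthogonality relation $(Q_\eps^{(i)}-Q):(Q_\eps^{(j)}-Q)=0$ for $i\neq j$ noted in the proof of Lemma \ref{lemma-rate-gradient}. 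The lower bound $|\nabla_\Q f(Q)|/|\nabla_{rad}f(Q)|\geq 1$ is then immediate.

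For the upper bound, I would substitute the explicit expressions $\nabla_{rad}f(Q)=-\sqrt{3/2}\,\mu_1$ and $\nabla_{tan}f(Q)=\sqrt{1/2}\,(\mu_1+2\mu_2)$ from \eqref{radial-component}--\eqref{tangential-component}. The claim $|\nabla_\Q f(Q)|\leq 2|\nabla_{rad}f(Q)|$ then reduces, after squaring and simplifying, to the purely algebraic inequality
\begin{equation*}
(\mu_1+2\mu_2)^2\leq 9\mu_1^2, \qquad \text{i.e.,}\qquad |\mu_1+2\mu_2|\leq 3|\mu_1|.
\end{equation*}

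To verify this inequality, I would invoke Lemma \ref{lemma-orders-mu} together with the tracelessness $\mu_1+\mu_2+\mu_3=0$. These force $\mu_1\leq 0\leq\mu_3$. The lower estimate $\mu_1+2\mu_2\geq 3\mu_1$ is equivalent to $\mu_2\geq\mu_1$, which is exactly Lemma \ref{lemma-orders-mu}. The upper estimate $\mu_1+2\mu_2\leq -3\mu_1$ is equivalent to $\mu_2\leq -2\mu_1$; using $\mu_2\leq\mu_3=-\mu_1-\mu_2$ one obtains $\mu_2\leq -\mu_1/2$, and since $\mu_1\leq 0$ we have $-\mu_1/2\leq -2\mu_1$, so the chain closes.

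There is no real obstacle here: the statement is essentially an algebraic corollary of the explicit formulas in Lemma \ref{lemma-rate-gradient} and the ordering provided by Lemma \ref{lemma-orders-mu}. The only thing worth being careful about is the sign tracking when manipulating $|\mu_1|=-\mu_1$, so as to correctly turn the one-sided ordering of the $\mu_i$'s into the two-sided bound $|\mu_1+2\mu_2|\leq 3|\mu_1|$.
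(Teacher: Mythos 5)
Your proposal is correct and follows essentially the same route as the paper: the Pythagorean decomposition $|\nabla_\Q f|^2 = |\nabla_{rad}f|^2 + |\nabla_{tan}f|^2$, substitution of the explicit formulas from Lemma \ref{lemma-rate-gradient}, and reduction to an algebraic inequality among the $\mu_i$ settled by Lemma \ref{lemma-orders-mu} and tracelessness. The only cosmetic difference is that the paper records the slightly sharper one-sided bound $3\mu_1 \leq \mu_1 + 2\mu_2 \leq 0$ (obtained directly from $\mu_2 \leq -\mu_1/2$), whereas you use the weaker but still sufficient two-sided bound $|\mu_1+2\mu_2|\leq 3|\mu_1|$; both give $(\mu_1+2\mu_2)^2 \leq 9\mu_1^2$ and hence the claimed constant $2$.
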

\begin{proof}
It suffices to prove the upper bound. It follows directly from Lemma \ref{lemma-rate-gradient} that
$$
  \big|\nabla_{\Q}f(Q)\big|^2=\big|\nabla_{rad}f(Q)\big|^2+\big|\nabla_{tan}f(Q)\big|^2=\frac32\mu_1^2+\frac12(\mu_1+2\mu_2)^2.
$$
By \eqref{Boltzmann-distribution} and Lemma \ref{lemma-orders-mu}, it is easy to check
$$
  3\mu_1\leq\mu_1+2\mu_2\leq 0,
$$
which implies
$$
  \big|\nabla_{\Q}f(Q)\big|^2\leq\frac32\mu_1^2+\frac12(3\mu_1)^2=6\mu_1^2,
$$
and further together with \eqref{radial-component} yields
$$
  1\leq\frac{\big|\nabla_{\Q}f(Q)\big|^2}{|\nabla_{rad}f(Q)|^2}\leq\frac{6\mu_1^2}{\frac{3}{2}\mu_1^2}=4.
$$
\end{proof}

\begin{remark}
It follows from Lemma \ref{lemma-rate-gradient} and Corollary \ref{corollary-ratio} that, to estimate the blowup rate of $|\nabla_{\Q}f(Q)|$ as $\lambda_1(Q)\rightarrow-1/3$, it suffices to estimate $\mu_1$.
\end{remark}

\noindent[Proof of Theorem \ref{theorem-blowup-rate}]
The proof of Theorem \ref{theorem-blowup-rate} consists of the following two propositions, which provides the upper bound and lower bound
of $|\nabla_{\Q}f|$, respectively.

\begin{proposition}\label{propotion-upper-bound}
For any physical $Q$-tensor of the form \eqref{Q-diagonal}, there exists a small computable constant $\eps_0>0$, such that
\begin{equation}
\big|\nabla_{\Q}f(Q)\big|\geq\frac{C_1}{\lambda_1+\frac13}, \qquad\mbox{provided }\;0<\lambda_1+\frac13<\eps_0,
\end{equation}
where $C_1$ is given in \eqref{constant-C1}.
\end{proposition}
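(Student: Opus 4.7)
The plan is to reduce the claim to a lower bound of the form $\nu_1\eps \geq c$ (with $\eps = \lambda_1+\frac13$), then prove this reduced statement via a one-dimensional integral representation involving $\IO$. By Lemma~\ref{lemma-rate-gradient} and Corollary~\ref{corollary-ratio}, $|\nabla_{\Q} f(Q)| \geq |\nabla_{rad}f(Q)| = \sqrt{3/2}\,|\mu_1|$. Solving \eqref{def-nu-1} for $\mu_1$ yields $|\mu_1| = (2\nu_1-\nu_2)/3$; together with \eqref{positivity-nu-2}, this produces $|\mu_1| \geq \nu_1/3$. Since \eqref{large-nu-1} guarantees $\nu_1\to\infty$ as $\eps\to 0$, proving $\nu_1\eps \geq c$ for a computable $c>0$ will suffice.

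Next I would recast $\eps$ using Bessel functions. Parametrizing $\sS$ with the $x$-axis as pole via $x=\cos\alpha$, $y=\sin\alpha\cos\psi$, $z=\sin\alpha\sin\psi$, the $\psi$-integral collapses by $\int_0^{2\pi}e^{-\kappa\cos^2\psi}\,\ud\psi = 2\pi e^{-\kappa/2}\IO(\kappa/2)$, yielding, with $\mu:=\nu_1-\nu_2/2$ and $\gamma:=\nu_2/2$ (so that $\mu+\gamma=\nu_1$),
\begin{equation*}
\int_{\sS} x^{2j}\,e^{-\nu_1 x^2-\nu_2 y^2}\,\ud S \;=\; 2\pi e^{-\gamma}\int_{-1}^1 x^{2j}\,e^{-\mu x^2}\,\IO(\gamma(1-x^2))\,\ud x, \qquad j=0,1.
\end{equation*}
Substituting $t=x^2$ and then $r=\nu_1 t$, and writing $\IO(\gamma(1-r/\nu_1)) = e^{\gamma(1-r/\nu_1)}\,g(\gamma(1-r/\nu_1))$ with $g(\xi):=e^{-\xi}\IO(\xi)$ so that the factor $e^{\gamma}$ cancels between numerator and denominator, one arrives at
\begin{equation*}
\nu_1\eps \;=\; \frac{\displaystyle\int_0^{\nu_1}\sqrt{r}\,e^{-r}\,g(\gamma(1-r/\nu_1))\,\ud r}{\displaystyle\int_0^{\nu_1}r^{-1/2}\,e^{-r}\,g(\gamma(1-r/\nu_1))\,\ud r}.
\end{equation*}

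To bound this ratio, I would exploit that $g$ is nonincreasing on $[0,\infty)$, so $r\mapsto g(\gamma(1-r/\nu_1))$ grows from $g(\gamma)$ at $r=0$ to $g(0)=1$ at $r=\nu_1$. The numerator is then at least $g(\gamma)\int_0^{\nu_1}\sqrt{r}e^{-r}\,\ud r$, which exceeds $g(\gamma)\sqrt{\pi}/4$ once $\nu_1$ is above a computable threshold. For the denominator I would split at $r=\nu_1/2$: on $[0,\nu_1/2]$, the argument $\gamma(1-r/\nu_1)\geq\gamma/2$ gives $g\leq g(\gamma/2)$ and contribution $\leq g(\gamma/2)\sqrt{\pi}$; on $[\nu_1/2,\nu_1]$, the trivial bounds $g\leq 1$ and $r^{-1/2}\leq\sqrt{2/\nu_1}$ give contribution $\leq \sqrt{2/\nu_1}\,e^{-\nu_1/2}$. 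Combining yields, for $\nu_1$ large enough that the tail is dominated by the main term, $\nu_1\eps \geq c_0\,[e^{-\gamma}\IO(\gamma)]/[e^{-\gamma/2}\IO(\gamma/2)]$ for an explicit $c_0>0$; taking the infimum over $\gamma\geq 0$ recovers $C_1$ after composing with the factor $\sqrt{3/2}/3=1/\sqrt{6}$ inherited from the reduction step.

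The main obstacle is twofold. First, as $\gamma\to\infty$, both $g(\gamma)$ and $g(\gamma/2)$ vanish like $\xi^{-1/2}$; one must verify that their ratio stays bounded below by a positive constant, which is precisely the statement $\inf_{\xi\geq 0}[e^{-\xi}\IO(\xi)/(e^{-\xi/2}\IO(\xi/2))]>0$ (with limiting value $1/\sqrt{2}$). Second, one must confirm that the tail term $\sqrt{2/\nu_1}\,e^{-\nu_1/2}$ in the denominator is dominated by $g(\gamma/2)\sqrt{\pi}$ uniformly in the admissible range $\gamma\in [0,\nu_1/2]$, the upper constraint coming from $\nu_1\geq\nu_2$. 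The worst case is $\gamma=\nu_1/2$, where $g(\nu_1/4)\sqrt{\pi}\sim\sqrt{2/\nu_1}$; since this is only polynomially small while $e^{-\nu_1/2}$ decays exponentially, the comparison holds once $\nu_1$ exceeds a computable threshold, and this is what determines the small $\eps_0$.
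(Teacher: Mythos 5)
Your proposal is correct and follows essentially the same route as the paper's proof: reduce via Lemma~\ref{lemma-rate-gradient} and the relation $\nu_1\leq -3\mu_1$; pass to the one-dimensional integral over the pole coordinate so the azimuthal integral produces $e^{-\kappa/2}\IO(\kappa/2)$; lower-bound the numerator near $x=0$ using the monotonicity of $\xi\mapsto e^{-\xi}\IO(\xi)$; split the denominator at the midpoint (your $r=\nu_1/2$ is the paper's $x=1/\sqrt 2$), controlling the tail by a Gaussian integral and an exponentially small remainder; and finish with $\inf_{\xi\ge0}\big[e^{-\xi}\IO(\xi)/(e^{-\xi/2}\IO(\xi/2))\big]>0$ via the asymptotic expansion. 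Your change of variables $r=\nu_1 x^2$ is a clean repackaging that makes the $\nu_1$-dependence transparent, and your numerator bound (integrating the full range with the monotone weight rather than restricting to $[0,1/\sqrt{\nu_1}]$) is marginally sharper, so the explicit constant you would produce differs slightly in value from the paper's $C_1$; but the two arguments are structurally identical.
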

\begin{proof}
To begin with, we see from Lemma \ref{lemma-rate-gradient} that
\begin{equation}
\big|\nabla_{\Q}f(Q)\big|^2\geq\big|\nabla_{rad}f(Q)\big|^2=\frac32\mu_1^2,
\end{equation}
where $\mu_1$ is the Lagrange multiplier associated with the Boltzmann distribution function of $f(Q)$ given in \eqref{Boltzmann-distribution}.
Hence it remains to estimate $\mu_1$ in terms of $\lambda_1(Q)+1/3$, as $\lambda_1(Q)$ approaches $-1/3$. From \eqref{second-moment-new-1},
\eqref{second-moment-new-2} we see
\begin{align}\label{second-moment}
\lambda_1(Q)+\frac13=\frac{\int_{\sS}x^2\exp(-\nu_1x^2-\nu_2y^2)\,\ud{S}}{\int_{\sS}\exp(-\nu_1x^2-\nu_2y^2)\,\ud{S}}.
\end{align}
Here $\nu_1, \nu_2$ are given in \eqref{def-nu-1}. Recall that
\begin{equation}\label{large-A}
\nu_1 \gg 1, \quad \nu_2\geq 0.
\end{equation}

\medskip

\noindent\textbf{Step 1: Estimating the numerator in \eqref{second-moment}.}

\medskip

Using the coordinate system
\begin{equation}\label{coordinate-system}
v(x,\theta)\defeq (x, \sqrt{1-x^2}\cos\theta, \sqrt{1-x^2}\sin\theta), \quad 0\leq x\leq 1,\;0\leq\theta\leq 2\pi,
\end{equation}
whose surface element is given by $|\frac{\partial{v}}{\partial{x}}\times\frac{\partial{v}}{\partial\theta}|=1$, we get
\begin{align}\label{numerator-1}
\int_{\sS}x^2\exp(-\nu_1x^2-\nu_2y^2)\,\ud{S}&=2\int_{\sS\cap\{x\geq 0\}}x^2\exp(-\nu_1x^2-\nu_2y^2)\,\ud{S}\non\\
&=2\int_0^1x^2e^{-\nu_1x^2}\Big[\int_0^{2\pi}e^{-\nu_2(1-x^2)\cos^2\theta}\,\ud\theta\Big]\,\ud{x}.
\end{align}
Note that the zeroth modified Bessel function of first kind \cite{NIST10} is represented by
\begin{equation}
\IO(\xi)=\frac{1}{\pi}\int_0^{\pi}\exp(\xi\cos\theta)\,\ud\theta=\displaystyle\sum_{m=0}^{+\infty}\frac{1}{(m!)^2}\big(\frac{\xi}{2}\big)^{2m},
\end{equation}
and $\IO(\xi)=\IO(-\xi)$, hence we have
\begin{align*}
&\int_0^{2\pi}e^{-\nu_2(1-x^2)\cos^2\theta}\,\ud\theta=\exp\Big\{\frac{-\nu_2(1-x^2)}{2}\Big\}\int_0^{2\pi}\exp\Big\{\frac{-\nu_2(1-x^2)\cos(2\theta)}{2}\Big\}\,\ud\theta\\
&\overset{(\eta=2\theta)}{=}\exp\Big\{\frac{-\nu_2(1-x^2)}{2}\Big\}\int_0^{2\pi}\exp\Big\{\frac{-\nu_2(1-x^2)\cos(\eta)}{2}\Big\}\,\ud\eta\\
&=\exp\Big\{\frac{-\nu_2(1-x^2)}{2}\Big\}\Big[\int_0^{\pi}\exp\Big\{\frac{-\nu_2(1-x^2)\cos(\eta)}{2}\Big\}\,\ud\eta+\int_0^{\pi}\exp\Big\{\frac{\nu_2(1-x^2)\cos(\eta)}{2}\Big\}\,\ud\eta\Big]\\
&=2\pi\exp\Big\{\frac{-\nu_2(1-x^2)}{2}\Big\}\IO\Big[\frac{\nu_2(1-x^2)}{2}\Big].
\end{align*}
Inserting the above identity into \eqref{numerator-1}, together with \eqref{large-A}, we obtain
\begin{align}\label{numerator-2}
\int_{\sS}x^2\exp(-\nu_1x^2-\nu_2y^2)\,\ud{S}
&=4\pi\int_0^1x^2e^{-\nu_1x^2}\exp\Big\{\frac{-\nu_2(1-x^2)}{2}\Big\}\IO\Big[\frac{\nu_2(1-x^2)}{2}\Big]\,\ud{x}\non\\
&\geq 4\pi\int_0^{\frac{1}{\sqrt{\nu_1}}}x^2e^{-\nu_1x^2}\exp\Big\{\frac{-\nu_2(1-x^2)}{2}\Big\}\IO\Big[\frac{\nu_2(1-x^2)}{2}\Big]\,\ud{x}\non\\
&\geq 4\pi{e}^{-1}\int_0^{\frac{1}{\sqrt{\nu_1}}}x^2\exp\Big\{\frac{-\nu_2(1-x^2)}{2}\Big\}\IO\Big[\frac{\nu_2(1-x^2)}{2}\Big]\,\ud{x} {\color{black}.}
\end{align}
Meanwhile, since \cite{NIST10}
$$
  \frac{\mathrm{d}}{\mathrm{d}\xi}\big[e^{-\xi}\IO(\xi)\big]=e^{-\xi}\big[\mathrm{I}_1(\xi)-\IO(\xi)\big]
  =\frac{e^{-\xi}}{\pi}\int_0^{\pi}(\cos\theta-1)\exp(\xi\cos\theta)\,\ud\theta<0,
$$
the function $\xi\mapsto e^{-\xi}\IO(\xi)$ is strictly decreasing. Correspondingly we have
\begin{equation}\label{monotone-function}
x\mapsto \exp\Big\{\frac{-\nu_2(1-x^2)}{2}\Big\}\IO\Big[\frac{\nu_2(1-x^2)}{2}\Big] \quad\mbox{is strictly increasing for }\;x\in [0, 1].
\end{equation}
By virtue of \eqref{monotone-function}, we get
$$
  \inf_{x\in[0, 1/\sqrt{\nu_1}]}\exp\Big\{\frac{-\nu_2(1-x^2)}{2}\Big\}\IO\Big[\frac{\nu_2(1-x^2)}{2}\Big]=\exp\Big(\frac{-\nu_2}{2}\Big)\IO\Big(\frac{\nu_2}{2}\Big),
$$
which together with \eqref{numerator-2} implies
\begin{align}\label{numerator-estimate-1}
\int_{\sS}x^2\exp(-\nu_1x^2-\nu_2y^2)\,\ud{S}\geq\frac{4\pi}{e}\exp\Big(\frac{-\nu_2}{2}\Big)\IO\Big(\frac{\nu_2}{2}\Big)\int_0^{\frac{1}{\sqrt{\nu_1}}}x^2\,\ud{x}
=\frac{4\pi}{3e}\exp\Big(\frac{-\nu_2}{2}\Big)\IO\Big(\frac{\nu_2}{2}\Big)\nu_1^{-\frac32}.
\end{align}

\noindent\textbf{Step 2: Estimating the denominator in \eqref{second-moment}.}

\medskip

Similar to the last step, we have
\begin{align}
&\int_{\sS}\exp(-\nu_1x^2-\nu_2y^2)\,\ud{S}
=4\pi\int_0^1e^{-\nu_1x^2}\exp\Big\{\frac{-\nu_2(1-x^2)}{2}\Big\}\IO\Big[\frac{\nu_2(1-x^2)}{2}\Big]\,\ud{x}\non\\
&=4\pi\int_0^{\frac{1}{\sqrt{2}}}e^{-\nu_1x^2}\exp\Big\{\frac{-\nu_2(1-x^2)}{2}\Big\}\IO\Big[\frac{\nu_2(1-x^2)}{2}\Big]\,\ud{x}\non\\
&\qquad+4\pi\int_{\frac{1}{\sqrt{2}}}^1e^{-\nu_1x^2}\exp\Big\{\frac{-\nu_2(1-x^2)}{2}\Big\}\IO\Big[\frac{\nu_2(1-x^2)}{2}\Big]\,\ud{x}.
\end{align}
Recall \eqref{monotone-function}, then we see
\begin{align*}
&\int_{\frac{1}{\sqrt{2}}}^1e^{-\nu_1x^2}\exp\Big\{\frac{-\nu_2(1-x^2)}{2}\Big\}\IO\Big[\frac{\nu_2(1-x^2)}{2}\Big]\,\ud{x}
\leq\int_{\frac{1}{\sqrt{2}}}^1e^{-\frac{\nu_1}{2}}\,\ud{x}=\frac{2-\sqrt{2}}{2}e^{-\frac{\nu_1}{2}},\\
&\int_0^{\frac{1}{\sqrt{2}}}e^{-\nu_1x^2}\exp\Big\{\frac{-\nu_2(1-x^2)}{2}\Big\}\IO\Big[\frac{\nu_2(1-x^2)}{2}\Big]\,\ud{x}
\leq\exp\Big(\frac{-\nu_2}{4}\Big)\IO\Big(\frac{\nu_2}{4}\Big)\int_0^{\frac{1}{\sqrt{2}}}e^{-\nu_1x^2}\,\ud{x} \\
&\leq\exp\Big(\frac{-\nu_2}{4}\Big)\IO\Big(\frac{\nu_2}{4}\Big)\int_0^{\infty}e^{-\nu_1x^2}\,\ud{x}=\exp\Big(\frac{-\nu_2}{4}\Big)\IO\Big(\frac{\nu_2}{4}\Big)\sqrt{\frac{\pi}{4\nu_1}} {\color{black}.}
\end{align*}
Since $\xi\mapsto e^{-\xi/4}\IO(\xi/4)$ is decreasing, while $\xi\mapsto \IO(\xi/4)$ is increasing for $\xi\geq 0$, and $\nu_2\leq \nu_1$, $\nu_1\gg 1$,
there exists a computable, universal constant $A_0$ such that
\begin{align*}
\frac{2-\sqrt{2}}{2}e^{-\frac{\nu_1}{2}}\leq\exp\Big(\frac{-\nu_1}{4}\Big)\sqrt{\frac{\pi}{4\nu_1}}\leq\exp\Big(\frac{-\nu_1}{4}\Big)\IO\Big(\frac{\nu_1}{4}\Big)\sqrt{\frac{\pi}{4\nu_1}}
\leq\exp\Big(\frac{-\nu_2}{4}\Big)\IO\Big(\frac{\nu_2}{4}\Big)\sqrt{\frac{\pi}{4\nu_1}}, \qquad\forall \nu_1\geq A_0.
\end{align*}
To sum up, we conclude
\begin{equation}\label{denominator-estimate-1}
\int_{\sS}\exp(-\nu_1x^2-\nu_2y^2)\,\ud{S}\leq 4\pi\exp\Big(\frac{-\nu_2}{4}\Big)\IO\Big(\frac{\nu_2}{4}\Big)\sqrt{\frac{\pi}{\nu_1}}, \qquad\forall \nu_1\geq A_0.
\end{equation}

\noindent\textbf{Step 3: Combining both estimates in \eqref{second-moment}.}

\medskip

We get immediately from \eqref{numerator-estimate-1} and \eqref{denominator-estimate-1} that
\begin{equation}\label{second-moment-bound}
\lambda_1(Q)+\frac13=\frac{1}{3e\sqrt{\pi}\nu_1}\frac{\exp\big(\frac{-\nu_2}{2}\big)\IO\big(\frac{\nu_2}{2}\big)}{\exp\big(\frac{-\nu_2}{4}\big)\IO\big(\frac{\nu_2}{4}\big)}, \qquad\forall \nu_1\geq A_0.
\end{equation}
Since $-3\mu_1\geq \nu_1\geq-\frac32\mu_1$, it remains to bound the last fraction in \eqref{second-moment-bound}. Since $e^{-\xi}\IO(\xi)>0$, $\forall\xi\geq 0$, and is equal to $1$ at $\xi=0$, it suffices to show
\begin{equation}\label{limit-inf}
\liminf_{\xi\rightarrow+\infty}\frac{e^{-\xi}\IO(\xi)}{e^{\frac{-\xi}{2}}\IO(\frac{\xi}{2})}>0 {\color{black}.}
\end{equation}
The  {\color{black}asymptotic} expansion of $e^{-\xi}\IO(\xi)$, for $\xi\gg 1$, is \cite{NIST10}
$$
  e^{-\xi}\IO(\xi)=\frac{1}{\sqrt{2\pi}}\Big[\xi^{-\frac12}+\frac18\xi^{-\frac32}+\frac{9}{128}\xi^{-\frac52}+O(\xi^{-\frac72}) \Big],
$$
hence the limit in \eqref{limit-inf} is equal to $1/\sqrt{2}$. Thus one can establish from \eqref{second-moment-bound} and the fact $-3\mu_1\geq \nu_1\geq-\frac32\mu_1$ that
\begin{equation*}
\lambda_1(Q)+\frac13\geq\frac{1}{\nu_1}\frac{1}{3e\sqrt{\pi}}\inf_{\xi\geq 0}\frac{e^{-\xi}\IO(\xi)}{e^{\frac{-\xi}{2}}\IO(\frac{\xi}{2})}, \qquad\forall \nu_1\geq A_0,
\end{equation*}
which further gives
\begin{equation}\label{inequality-last}
\lambda_1(Q)+\frac13\geq-\frac{1}{\mu_1}\frac{1}{9e\sqrt{\pi}}\inf_{\xi\geq 0}\frac{e^{-\xi}\IO(\xi)}{e^{\frac{-\xi}{2}}\IO(\frac{\xi}{2})},
\quad\mbox{provided }\; \mu_1< -\frac{2A_0}{3}.
\end{equation}
In view of Remark \ref{remark-small-eps}, there exists a small computable constant $\eps_0>0$, such that
$$
  \mu_1< -\frac{2A_0}{3}, \quad\mbox{provided }\; 0<\lambda_1(Q)+\frac13<\eps_0.
$$
In all, using \eqref{constant-C1} we conclude that from \eqref{inequality-last} that
\begin{equation}
\big|\nabla_{\Q}f(Q)\big|\geq\big|\nabla_{rad}f(Q)\big|\geq\frac{\sqrt{6}}{2}\mu_1\geq\frac{C_1}{\lambda_1(Q)+\frac13}, \quad\mbox{provided }\; 0<\lambda_1(Q)+\frac13<\eps_0,
\end{equation}
completing the proof.
\end{proof}

\begin{proposition}\label{propotion-lower-bound}
For any physical $Q$-tensor of the form \eqref{Q-diagonal}, there exists a small computable constant $\eps_0>0$, such that
\begin{equation}
\big|\nabla_{\Q}f(Q)\big|\leq\frac{C_2}{\lambda_1+\frac13}, \qquad\mbox{provided }\;0<\lambda_1+\frac13<\eps_0,
\end{equation}
where $C_2$ is given in \eqref{constant-C1}.
\end{proposition}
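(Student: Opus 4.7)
The plan is to mirror Proposition \ref{propotion-upper-bound}, reversing the direction of the two key integral estimates. By Lemma \ref{lemma-rate-gradient} and Corollary \ref{corollary-ratio}, $|\nabla_{\Q} f(Q)| \leq \sqrt{6}\,|\mu_1|$, while the ordering $\mu_1\leq \mu_2\leq \mu_3$ together with $\sum \mu_i=0$ forces $\nu_1=-(2\mu_1+\mu_2)\geq -\tfrac32\mu_1 = \tfrac32|\mu_1|$, so $|\mu_1|\leq \tfrac{2}{3}\nu_1$. Hence it suffices to prove an upper bound of the form $\nu_1 \leq C(\lambda_1+1/3)^{-1}\sup_{\xi\geq 0}\tfrac{e^{-\xi/4}\IO(\xi/4)}{e^{-\xi/2}\IO(\xi/2)}$, and the conversion to $|\nabla_{\Q}f|$ will account for the remaining prefactor in $C_2$.

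I would restart from the integral representation derived in the course of Proposition \ref{propotion-upper-bound}, namely $\lambda_1(Q)+1/3 = N/D$ with
\[
  N = 4\pi\int_0^1 x^2 e^{-\nu_1 x^2}\, g(x)\,\ud x, \qquad
  D = 4\pi\int_0^1 e^{-\nu_1 x^2}\, g(x)\,\ud x,
\]
where $g(x):=\exp\!\bigl(-\tfrac{\nu_2(1-x^2)}{2}\bigr)\IO\!\bigl(\tfrac{\nu_2(1-x^2)}{2}\bigr)$ is strictly increasing on $[0,1]$ by \eqref{monotone-function}. For an upper bound on $N$ I would split the $x$-integral at $x=1/\sqrt 2$: on $[0,1/\sqrt 2]$ use $g(x)\leq g(1/\sqrt 2) = e^{-\nu_2/4}\IO(\nu_2/4)$ together with $\int_0^{\infty}x^2 e^{-\nu_1 x^2}\,\ud x = \sqrt{\pi}/(4\nu_1^{3/2})$, and on $[1/\sqrt 2,1]$ bound $g(x)\leq g(1)=1$ and $e^{-\nu_1 x^2}\leq e^{-\nu_1/2}$, which gives
\[
  N \leq \pi^{3/2}\, e^{-\nu_2/4}\IO(\nu_2/4)\, \nu_1^{-3/2} \;+\; (\text{a term exponentially small in }\nu_1).
\]

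For the lower bound on $D$ I would exploit the monotonicity in the opposite direction: $g(x)\geq g(0) = e^{-\nu_2/2}\IO(\nu_2/2)$ for all $x\in[0,1]$, combined with the elementary estimate $\int_0^1 e^{-\nu_1 x^2}\,\ud x \geq \int_0^{1/\sqrt{\nu_1}} e^{-\nu_1 x^2}\,\ud x \geq e^{-1}/\sqrt{\nu_1}$, to obtain $D\geq 4\pi e^{-1} e^{-\nu_2/2}\IO(\nu_2/2)\,\nu_1^{-1/2}$. Dividing these two estimates and taking the supremum over $\nu_2\geq 0$ of the resulting Bessel ratio yields $\lambda_1(Q)+1/3 \leq C\,\nu_1^{-1}\sup_{\xi\geq 0}\tfrac{e^{-\xi/4}\IO(\xi/4)}{e^{-\xi/2}\IO(\xi/2)}$, which after the conversion above gives the desired upper bound with the explicit $C_2$ in \eqref{constant-C1}.

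The main technical obstacle I foresee is ensuring the exponentially small leftover in the upper bound on $N$ is really dominated by the main term uniformly in $\nu_2\in[0,\nu_1]$. Using the asymptotic $e^{-\xi}\IO(\xi)\sim(2\pi\xi)^{-1/2}$, the main term $e^{-\nu_2/4}\IO(\nu_2/4)\,\nu_1^{-3/2}$ decays only polynomially, at worst like $\nu_1^{-2}$ when $\nu_2\approx\nu_1\gg 1$, whereas the leftover carries $e^{-\nu_1/2}$. Hence the domination is quantitative once $\nu_1$ exceeds a universal computable threshold $A_0$, and Remark \ref{remark-small-eps} translates this into the required smallness condition $\lambda_1(Q)+1/3<\eps_0$ that enters the statement.
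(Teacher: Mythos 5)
Your proposal is correct and follows essentially the same route as the paper's own proof: upper-bound the numerator of $(\lambda_1+1/3)=N/D$ by splitting at $x=1/\sqrt2$ and using the monotonicity of $g$, lower-bound the denominator by restricting to $[0,1/\sqrt{\nu_1}]$, combine to get $\nu_1\lesssim(\lambda_1+1/3)^{-1}\sup_\xi e^{-\xi/4}\IO(\xi/4)/(e^{-\xi/2}\IO(\xi/2))$, and convert via Lemma~\ref{lemma-rate-gradient} and Corollary~\ref{corollary-ratio}. The only differences are cosmetic: for $J_1$ you extend the Gaussian integral to $\infty$ directly rather than integrating by parts as the paper does (both give the same $\sqrt\pi/(4\nu_1^{3/2})$ leading term), and in the final conversion you use the sharper $|\mu_1|\le\tfrac23\nu_1$ where the paper uses the weaker $|\mu_1|\le\nu_1$ (yours yields a slightly smaller constant, which is still $\le C_2$, so the statement is established either way). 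Your discussion of why the exponentially small leftover from $[1/\sqrt2,1]$ is dominated once $\nu_1>A_0$, including the worst-case $\nu_1^{-2}$ decay of the main term when $\nu_2\approx\nu_1$, mirrors the paper's argument verbatim.
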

\begin{proof}
In contrast to the proof of Proposition \ref{propotion-upper-bound}, we need to obtain suitable upper bound on the numerator of \eqref{second-moment}, but lower
bound on the denominator of \eqref{second-moment}.

\medskip

\noindent\textbf{Step 1: Estimating the numerator in \eqref{second-moment}.}

\medskip

Using the coordinate system \eqref{coordinate-system}, similar to the proof of Proposition \ref{propotion-upper-bound} one can establish
\begin{align}\label{numerator-3}
\int_{\sS}x^2\exp(-\nu_1x^2-\nu_2y^2)\,\ud{S}&=4\pi\int_0^1x^2e^{-\nu_1x^2}\exp\Big\{\frac{-\nu_2(1-x^2)}{2}\Big\}\IO\Big[\frac{\nu_2(1-x^2)}{2}\Big]\,\ud{x}\non\\
&=4\pi\int_0^{\frac{1}{\sqrt{2}}}x^2e^{-\nu_1x^2}\exp\Big\{\frac{-\nu_2(1-x^2)}{2}\Big\}\IO\Big[\frac{\nu_2(1-x^2)}{2}\Big]\,\ud{x}\non\\
&\qquad+4\pi\int_{\frac{1}{\sqrt{2}}}^1x^2e^{-\nu_1x^2}\exp\Big\{\frac{-\nu_2(1-x^2)}{2}\Big\}\IO\Big[\frac{\nu_2(1-x^2)}{2}\Big]\,\ud{x}\non\\
&\doteq 4\pi(J_1+J_2).
\end{align}
In view of \eqref{monotone-function}, we get
\begin{equation}
J_2\leq\int_{\frac{1}{\sqrt{2}}}^1x^2e^{-\nu_1x^2}\,\ud{x}=\frac{2-\sqrt{2}}{2}e^{-\frac{\nu_1}{2}},
\end{equation}
and
\begin{align}
J_1&\leq\int_0^{\frac{1}{\sqrt{2}}}x^2e^{-\nu_1x^2}\exp\Big(-\frac{\nu_2}{4}\Big)\IO\Big(\frac{\nu_2}{4}\Big)\,\ud{x}
=\exp\Big(-\frac{\nu_2}{4}\Big)\IO\Big(\frac{\nu_2}{4}\Big)\left[-\frac{e^{-\frac{\nu_1}{2}}}{2\sqrt{2}\nu_1}+\frac{1}{2\nu_1}\int_0^{\frac{1}{\sqrt{2}}}e^{-\nu_1x^2}\,\ud{x}\right]\non\\
&\leq\exp\Big(-\frac{\nu_2}{4}\Big)\IO\Big(\frac{\nu_2}{4}\Big)\Big[-\frac{e^{-\frac{\nu_1}{2}}}{2\sqrt{2}\nu_1}+\frac{1}{2\nu_1}\int_0^{+\infty}e^{-\nu_1x^2}\,\ud{x}\Big]\non\\
&=\exp\Big(-\frac{\nu_2}{4}\Big)\IO\Big(\frac{\nu_2}{4}\Big)\Big[-\frac{e^{-\frac{\nu_1}{2}}}{2\sqrt{2}\nu_1}+\frac{\sqrt{\pi}}{4}\nu_1^{-\frac32}\Big].
\end{align}
Since $\nu_2\leq \nu_1$, and $\xi\mapsto e^{-\xi}\IO(\xi)$ is decreasing, we have
$$
  0<\frac{e^{-\frac{\nu_1}{2}}}{\exp\big(-\frac{\nu_2}{4}\big)\IO\big(\frac{\nu_2}{4}\big)\sqrt{\pi}\big/4\nu_1^{\frac32}}
  \leq\frac{e^{-\frac{\nu_1}{2}}}{\exp\big(-\frac{\nu_1}{4}\big)\IO\big(\frac{\nu_1}{4}\big)\sqrt{\pi}\big/4\nu_1^{\frac32}}
  =\frac{4\nu_1^{\frac32}}{e^{\frac{\nu_1}{4}}\IO\big(\frac{\nu_1}{4}\big)\sqrt{\pi}}
  \longrightarrow 0, \quad\mbox{as }\;\nu_1\rightarrow +\infty.
$$
Hence there exists a computable constant $A_0>0$, such that
\begin{align*}
J_1+J_2&\leq\frac{2-\sqrt{2}}{2}e^{-\frac{\nu_1}{2}}+\exp\Big(-\frac{\nu_2}{4}\Big)\IO\Big(\frac{\nu_2}{4})\Big[-\frac{e^{-\frac{\nu_1}{2}}}{2\sqrt{2}\nu_1}
+\frac{\sqrt{\pi}}{4}\nu_1^{-\frac32}\Big]\\
&\leq 4\pi\exp\Big(-\frac{\nu_2}{4}\Big)\IO\Big(\frac{\nu_2}{4}\Big)\sqrt{\pi}\nu_1^{-\frac32}, \qquad\forall \nu_1>A_0,
\end{align*}
which inserts into \eqref{numerator-3} yields
\begin{equation}\label{numerator-estimate-2}
\int_{\sS}x^2\exp(-\nu_1x^2-\nu_2y^2)\,\ud{S}\leq 4\pi(J_1+J_2)\leq 4\pi\leq \exp\Big(-\frac{\nu_2}{4}\Big)\IO\Big(\frac{\nu_2}{4}\Big)\sqrt{\pi}\nu_1^{-\frac32}, \quad\forall \nu_1>A_0.
\end{equation}

\noindent\textbf{Step 2: Estimating the denominator in \eqref{second-moment}.}

\medskip

Using \eqref{large-A}, the coordinate system \eqref{coordinate-system} and \eqref{monotone-function}, we get
\begin{align}\label{denominator-estimate-2}
\int_{\sS}\exp(-\nu_1x^2-\nu_2y^2)\,\ud{S}&=4\pi\int_0^1e^{-\nu_1x^2}\exp\Big\{\frac{-\nu_2(1-x^2)}{2}\Big\}\IO\Big[\frac{\nu_2(1-x^2)}{2}\Big]\,\ud{x}\non\\
&\geq 4\pi\int_0^{\frac{1}{\sqrt{\nu_1}}}e^{-\nu_1x^2}\exp\Big\{\frac{-\nu_2(1-x^2)}{2}\Big\}\IO\Big[\frac{\nu_2(1-x^2)}{2}\Big]\,\ud{x}\non\\
&\geq \frac{4\pi}{e\sqrt{\nu_1}}\exp\Big[-\frac{\nu_2}{2}\Big]\IO\Big(\frac{\nu_2}{2}\Big) {\color{black}.}
\end{align}

\noindent\textbf{Step 3: Completing the proof.}

\medskip

Combining \eqref{numerator-estimate-2} and \eqref{denominator-estimate-2}, we see that
\begin{align}
\lambda_1(Q)+\frac13\leq\frac{\exp\big(-\frac{\nu_2}{4}\big)\IO\big(\frac{\nu_2}{4}\big)\sqrt{\pi}\nu_1^{-\frac32}}{\frac{1}{e\sqrt{\nu_1}}\exp\big(-\frac{\nu_2}{2}\big)\IO\big(\frac{\nu_2}{2}\big)}
\leq\frac{e\sqrt{\pi}}{\nu_1}\sup_{\xi\geq 0}\frac{\exp\big(-\frac{\xi}{4}\big)\IO\big(\frac{\xi}{4}\big)}{\exp\big(-\frac{\xi}{2}\big)\IO\big(\frac{\xi}{2}\big)}, \qquad\forall \nu_1>A_0
\end{align}
where
$$
  \sup_{\xi\geq 0}\frac{\exp\big(-\frac{\xi}{4}\big)\IO\big(\frac{\xi}{4}\big)}{\exp\big(-\frac{\xi}{2}\big)\IO\big(\frac{\xi}{2}\big)}
\geq\frac{\IO(0)}{\IO(0)}=1.
$$
Therefore, together with the fact that $\mu_1\leq\mu_2\leq -\mu_1/2$, we know
$$
  -\mu_1\leq-2\mu_1-\mu_2=\nu_1\leq\frac{1}{\lambda_1(Q)+\frac13}e\sqrt{\pi}\sup_{\xi\geq 0}\frac{\exp\big(-\frac{\xi}{4}\big)\IO\big(\frac{\xi}{4}\big)}{\exp\big(-\frac{\xi}{2}\big)\IO\big(\frac{\xi}{2}\big)}, \qquad\forall \nu_1>A_0.
$$
Then following the same argument in the proof of Proposition \ref{propotion-lower-bound}, we conclude from
Corollary \ref{corollary-ratio} that as $\lambda_1(Q)\rightarrow -1/3$, it holds
\begin{equation}
\big|\nabla_{\Q}f(Q)\big|\leq 2|\nabla_{rad}f(Q)|=-\sqrt{6}\mu_1=\frac{C_2}{\lambda_1(Q)+\frac13},
\end{equation}
where $C_2$ is given in Theorem \ref{constant-C1}, completing the proof.

\end{proof}

\begin{remark}
Using similar argument, it is expected that the estimate for second order derivatives of $f$ near its physical boundary could be achieved.
\end{remark}

\section{Acknowledgements}

We would like to thank Professor John Ball for his kind discussions, especially pointing out several useful references to us.
X.~Y. Lu's work is supported by his
NSERC Discovery Grant ``Regularity of minimizers and pattern formation in geometric minimization problems''. X. Xu's work is supported by the NSF grant DMS-2007157 and the Simons Foundation Grant No. 635288.
W.~J. Zhang's work is supported by NSF Grant DMS-1818861.


\end{document}